\newtheorem{theorem}{Theorem}
\newtheorem{lemma}[theorem]{Lemma}
\newtheorem{corollary}[theorem]{Corollary}
\theoremstyle{definition}
\newtheorem{definition}{Definition}
\theoremstyle{remark}
\newtheorem{remark}{Remark}
\definecolor{light-gray}{gray}{0.5}
\begin{document}

\title{Total order compatible with addition on commutative semigroups}

\author{Askold Khovanskii \thanks{The work was partially supported by the Canadian Grant No. 156833-17. }}
\maketitle

\begin{abstract} In the paper we present a detailed exposition of mainly known results (for example, see \cite{1}). We describe all total orders $\succ$  compatible
with addition on  additive subsemigroup $S$ of finite dimensional spaces
over rational numbers. We provide a necessary and sufficient condition
under which  a finitely generated semigroups $S$ equipped with an order
$\succ$ is a well-ordered set. We also present some auxiliary results on
orders compatible with addition on additive subsemigroups of finite
dimensional spaces over real numbers.

 All arguments in this paper are based on two simple theorems in the geometry
of convex (not necessarily closed) sets.  Proofs of these  theorems are
presented for readers's convenience.

  A first version of this paper was written as a handout for my graduate
course on the theory of Newton--Okounkov bodies.
\end{abstract}

\section{Introduction}

A total order $\succ$ on a commutative semigroup $S$ is compatible with addition if, for any triple $x,y,a\in S$ such that $x\succ y$, the inequality   $x+a\succ y+a$ holds.

We are  interested in all such orders on   subsemigroups of the $n$-dimensional  lattice $\mathbb Z^n\subset \mathbb R^n$. We will completely describe such orders  on additive subsemigroups of the  $n$-dimensional space $\mathbb Q^n$ over the field of rational numbers~$\mathbb Q$.

We are also interested in all such orders on the semigroup $\mathbb Z_{\geq 0}^n\subset \mathbb Z^n$ (consisting of all integral points in $\mathbb R^n$ with nonnegative coordinates), which make $\mathbb Z_{\geq 0}^n$ a well-ordered set.  We will completely describe all such orders on any finitely generated subsemigroup of the additive group of the space $\mathbb Q^n$.

Our arguments use  geometry of convex subsets (not necessarily closed or bounded) in  real affine spaces. We use the classical Caratheodory Theorem, which describes the convex hull $\Delta(A)$  (the smallest convex set containing $A$) of a set $A\subset \mathbb R^n$. We also use   a version of the Separation Theorem which holds for any convex  set  $\Delta\subset \mathbb R^n$ (not necessarily closed or bounded) and for any  point $a$ in its complement $a\in \mathbb R^n\setminus \Delta$. For readers's convenience,  we  present proofs of both these theorems in convex geometry.
\medskip

I would like to thank a student Joe Glasheen from  my graduate course on the theory of Newton--Okounkov bodies, who edited English in this paper.

\section{Order compatible with addition on general commutative semigroups}

We are mainly interested  in the  class of additive subsemigroups of real vector spaces.

The following (obvious) Lemma on general commutative semigroups automatically holds for semigroups belonging to this class.

\begin{lemma}\label{lemma1} If commutative semigroup $S$ has a total order compatible with addition, then $S$ has a cancelation property; and an identity $nx=ny$ implies $x=y$, where $x,y\in S$ and $n$ is a natural number.
\end{lemma}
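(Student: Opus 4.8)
The plan is to prove the two assertions separately, both by a direct argument exploiting that $\succ$ is a \emph{total} order compatible with addition.

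First, for the cancellation property, suppose $x+a = y+a$ for some $x,y,a\in S$. Since $\succ$ is total, either $x\succ y$, or $y\succ x$, or $x=y$. In the first case compatibility with addition gives $x+a\succ y+a$, contradicting $x+a=y+a$; the second case is symmetric. Hence $x=y$, which is exactly cancellation. The only subtlety to note is that I am using that $\succ$ is a strict total order, so trichotomy holds and $x+a\succ y+a$ genuinely contradicts the equality.

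Second, for the statement that $nx=ny$ implies $x=y$ (with $n$ a natural number), I would again argue by trichotomy. Assume $x\succ y$. The key step is to show this propagates to $nx\succ ny$: applying compatibility with addition $n$ times, or more precisely by an easy induction on $k$ that $kx\succ ky$ for all $k\ge 1$ (from $kx\succ ky$ and $x\succ y$ one gets $kx+x\succ ky+x\succ ky+y$, i.e.\ $(k+1)x\succ (k+1)y$, using transitivity), we obtain $nx\succ ny$, contradicting $nx=ny$. The case $y\succ x$ is symmetric, so $x=y$.

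The argument is entirely routine; there is no real obstacle. The one place to be slightly careful is the induction step for $nx\succ ny$: one needs both compatibility with addition (to add $x$, then $y$, to an existing inequality) and transitivity of $\succ$ to chain $kx+x\succ ky+x$ with $ky+x\succ ky+y$. I would state this small induction explicitly rather than hand-wave "add the inequality to itself $n$ times," since adding inequalities is not an operation the axioms literally provide — only adding a fixed element to both sides of one inequality is. Everything else follows immediately from trichotomy of a total order.
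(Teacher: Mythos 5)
Your proof is correct and follows essentially the same route as the paper: trichotomy plus compatibility with addition gives cancellation, and the same trichotomy combined with $x\succ y\Rightarrow nx\succ ny$ gives the second claim. The only difference is that you spell out the small induction establishing $kx\succ ky$, which the paper states without proof; that is a reasonable addition, not a divergence.
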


\begin{proof} If $x\prec y$ or $y\succ x$, then, for any $a\in S$, we correspondingly have that $x+a\succ y+a$ or $y+a\succ x+a$. So, if $x+a=y+a$, then $x=y$. Thus, the semigroup $S$ has the  cancelation property.

If $x\succ y$ or $y\succ x$, then  we correspondingly have that $nx\succ ny$ or $ny\succ nx$. So, if $nx=ny$, then $x=y$.
\end{proof}

\begin{corollary}\label{col1} If $S$ satisfies assumption of Lemma \ref{lemma1}, then $S$ can be naturally embedded  to its Grothendieck group $G$; and the group $G$ is a free commutative group (i.e. $G$ has no torsion).
\end{corollary}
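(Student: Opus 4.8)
The plan is to construct the Grothendieck group $G$ of $S$ explicitly and then use the cancellation property (from Lemma~\ref{lemma1}) to show both that $S \hookrightarrow G$ and that $G$ is torsion-free. First I would recall the construction: on the set $S \times S$ of formal differences, introduce the relation $(x_1,y_1) \sim (x_2,y_2)$ iff there exists $c \in S$ with $x_1 + y_2 + c = x_2 + y_1 + c$; the quotient $G = (S \times S)/{\sim}$ is an abelian group under coordinatewise addition, with the class of $(x,y)$ playing the role of $x - y$. The universal property is routine and I would just cite it.

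The key observation is that the cancellation property lets us drop the auxiliary element $c$: the relation $(x_1,y_1)\sim(x_2,y_2)$ is equivalent to $x_1 + y_2 = x_2 + y_1$. Using this, the map $S \to G$ sending $x$ to the class of $(x + s_0, s_0)$ (for any fixed $s_0 \in S$, or, after adjoining an identity, the class of $(x,0)$) is injective precisely because cancellation holds: if $x$ and $x'$ have the same image then $x + s_0 + s_0 = x' + s_0 + s_0$, hence $x = x'$. So $S$ embeds in $G$.

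For torsion-freeness, suppose $g \in G$ satisfies $ng = 0$ for some natural number $n \geq 1$. Write $g$ as a difference of elements of (the image of) $S$, say $g = x - y$ with $x, y \in S$. Then $ng = 0$ in $G$ means $nx = ny$ in $G$, and since $S$ embeds in $G$ this is an equality in $S$: $nx = ny$. By the second assertion of Lemma~\ref{lemma1}, $x = y$, so $g = 0$. Hence $G$ has no torsion, and being a torsion-free abelian group it is (as asserted) free in the finitely generated case, or more generally torsion-free as the statement literally claims.

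The main obstacle, such as it is, is purely bookkeeping: one must be careful about whether $S$ has an identity element (semigroup vs.\ monoid) so that formal differences $(x,0)$ make sense, or else phrase everything with a fixed basepoint $s_0$ and differences $(x+s_0, y+s_0)$; and one must verify that $\sim$ is genuinely an equivalence relation and that addition descends to the quotient — but all of this is standard and short. The only place the hypotheses of Lemma~\ref{lemma1} are genuinely used are (i) cancellation, to get injectivity of $S \to G$ and to simplify $\sim$, and (ii) the implication $nx = ny \Rightarrow x = y$, to get torsion-freeness; I would make sure both invocations are flagged explicitly.
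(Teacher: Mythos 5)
Your proposal is correct and is exactly the standard argument the paper leaves implicit: the paper states this corollary without proof, and the two properties you invoke (cancellation for injectivity of $S\to G$, and $nx=ny\Rightarrow x=y$ for torsion-freeness) are precisely the two conclusions of Lemma~\ref{lemma1}, so your write-up simply supplies the omitted details. Your parenthetical caution is also well taken: torsion-free implies free only for finitely generated abelian groups (e.g.\ $\mathbb{Q}$ is torsion-free but not free), so the paper's gloss ``free (i.e.\ no torsion)'' is accurate only in the finitely generated setting, while what your argument actually establishes --- and all the paper ever uses --- is the absence of torsion.
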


\begin{corollary}\label{col2}  Any total order  compatible with addition  on a commutative semigroup $S$ can be uniquely extended to  the total order compatible with addition  on the  Grothendieck group $G$ of the semigroup $S$.
\end{corollary}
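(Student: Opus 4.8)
The plan is to show first that the extension, if it exists, is \emph{forced}; this gives uniqueness and simultaneously reveals how the order on $G$ must be defined. Then I check that this forced definition actually works.

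\emph{Uniqueness.} Let $\succ$ denote the given order on $S$, and suppose $\succ'$ is any total order on $G$ compatible with addition whose restriction to $S\subset G$ is $\succ$. By Corollary~\ref{col1} every element of $G$ can be written as $a-b$ with $a,b\in S$. Given $g=a-b$ and $h=c-d$ in $G$, compatibility of $\succ'$ with addition gives $g\succ' h\iff g+(b+d)\succ' h+(b+d)\iff a+d\succ' b+c$; since $a+d$ and $b+c$ lie in $S$, this last inequality is just $a+d\succ b+c$. Hence $\succ'$ is completely determined by $\succ$, so such an extension is unique. (Running the same computation with $=$ in place of $\succ$ recovers the defining relation of the Grothendieck group.)

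\emph{Existence.} Define, for $g=a-b$ and $h=c-d$ in $G$, the relation $g\succ h$ to mean $a+d\succ b+c$ in $S$. The first point to verify — and the only step that is not a one-line cancellation — is that this is well defined, i.e. independent of the chosen representatives. If $a-b=a'-b'$ and $c-d=c'-d'$ in $G$, then, by the description of $G$ together with the cancellation property of Lemma~\ref{lemma1}, $a+b'=a'+b$ and $c+d'=c'+d$ in $S$; adding suitable elements to the inequality $a+d\succ b+c$ and then cancelling yields $a'+d'\succ b'+c'$. This well-definedness is the technical heart of the argument, and it is exactly where Lemma~\ref{lemma1} is needed.

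It remains to check the routine properties, each of which reduces to the corresponding property of $\succ$ on $S$ after adding and cancelling common summands. Trichotomy on $G$ follows from trichotomy of $\succ$ on $S$ applied to $a+d$ and $b+c$, together with the observation that $a+d=b+c$ is precisely the relation $g=h$ in $G$. Transitivity: from $a+d\succ b+c$ and $c+f\succ d+e$, adding the two inequalities and cancelling $c+d$ gives $a+f\succ b+e$. Compatibility with addition: replacing $g,h$ by $g+\ell,h+\ell$ with $\ell=p-q$ merely adds $p+q$ to both sides of $a+d\succ b+c$. Finally, for $s,t\in S$ the representations $s=(s+t)-t$ and $t=(s+t)-s$ show that $s\succ t$ holds in $G$ if and only if $s\succ t$ holds in $S$, so the constructed order extends the given one. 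Together with the uniqueness argument above, this completes the proof.
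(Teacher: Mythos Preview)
Your proof is correct and follows exactly the same route as the paper: define $a-b\succ c-d$ by $a+d\succ b+c$, observe this is forced (uniqueness), and check it is well defined and has the required properties (existence). The paper's version merely asserts well-definedness and uniqueness in one sentence, whereas you actually carry out the verifications; nothing is genuinely different.
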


\begin{proof} Any elements  $a_1,a_2\in G$ can be  represented  in the form $x_1-y_1=a_1$, $x_2-y_2=a_2$. We  say that $a_1$ is bigger (or correspondingly,  smaller) than $a_2$ if $x_1+y_2$ is bigger  (or correspondingly, smaller) than $x_2+y_1$. The above  order  on $G$ is well defined (i.e. is independent of representations of $a_1,a_2$ as the difference of elements from $S$) and is the only possible extension of the order on $S$ to an order on $G$.
\end{proof}

Thus, a description of all total orders compatible with addition  on a commutative semigroup $S$ is reduced
to a description of all total orders  compatible with addition  on its Grothendieck group $G$.

On any free commutative group $G$ there is a total order compatible with addition. We will construct such order later (see Lemma \ref{lemma9}), when we will discuss lexicographic  orders on real vector spaces. So, the conditions on semigroup $S$ from Lemma \ref{lemma1} are not only necessary but also sufficient for existence of a total order   on $S$ compatible with addition.

\medskip

One can easily check the following two lemmas.

\begin{lemma}\label{set G_+ 1} For any total order  $\succ$ compatible with addition  on a commutative group $G$ the
set $G_+\subset G$ defined by condition $x\in G_+ \Leftrightarrow x\succ
0$ has the following properties:
\begin{enumerate}

\item the set $G_+$ is a semigroup with respect to addition;

\item zero is not in $G_+$;

 \item for any $x\neq 0$ exactly one element from the couple $(x,-x)$ belongs~to~$G_+$.
\end{enumerate}
\end{lemma}

\begin{lemma} \label{set G_+ 2} If a subset $G_+ \subset G $ satisfies the conditions 1)--3)
from the previous lemma \ref{set G_+ 1} then the relation $x\succ y \Leftrightarrow
x-y\in G_+$ defines a total order on the group $G$ compatible with
addition.
\end{lemma}

Let us prove a simple general lemma on well-ordered commutative semigroups, assuming that the ordering is compatible with addition.

\begin{lemma}\label{well-ordered 1} If a commutative semigroup $S$ equipped with a total order $\succ$ compatible with addition is a well-ordered set, then, for any nonzero element $a\in S$, the condition $2a \succ a$ holds. (If $S$ contains the origin, then this condition means that the origin is the smallest element in $S$.)
\end{lemma}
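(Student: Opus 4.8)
The plan is to argue by contradiction. Suppose some nonzero $a\in S$ satisfies $a\succ 2a$ (the remaining possibility, $a=2a$, is excluded since by Lemma~\ref{lemma1} the semigroup has cancelation, which combined with $a+a=a=a+0$... but wait, $0$ need not lie in $S$; still, if $a=2a$ then the order is not total at the pair $a,2a$ unless $a=2a$ is allowed, and $a\ne 2a$ follows from $nx=ny\Rightarrow x=y$ applied with care, or simply from totality: $a\succ 2a$ or $2a\succ a$). So assume $a\succ 2a$.

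First I would use compatibility with addition: adding $a$ to both sides of $a\succ 2a$ repeatedly yields $2a\succ 3a$, then $3a\succ 4a$, and in general $na\succ (n+1)a$ for every natural number $n$. More precisely, from $a\succ 2a$ we get $a+ka\succ 2a+ka$, i.e. $(k+1)a\succ (k+2)a$ for all $k\ge 0$.

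Next I would observe that this produces a strictly decreasing infinite sequence $a\succ 2a\succ 3a\succ 4a\succ\cdots$ in $S$. Since each element $na$ lies in $S$ (as $S$ is closed under addition), the set $\{na : n\in\mathbb{N}\}$ is a nonempty subset of $S$ with no smallest element, contradicting the hypothesis that $S$ is well-ordered. Hence $2a\succ a$ for every nonzero $a$.

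Finally, for the parenthetical remark: if $0\in S$, then for every nonzero $a\in S$ we have $2a\succ a$, and adding $(-a)$ is not available inside $S$, but we can instead note $2a\succ a$ means $2a-a\in G_+$ in the Grothendieck group, i.e. $a\succ 0$; alternatively and more elementarily, apply compatibility directly: from $a\succ 0$ (to be shown) — better, observe that $a = a+0$ and compare with $0$; since the order is total, either $a\succ 0$ or $0\succ a$; if $0\succ a$ then adding $a$ gives $a\succ 2a$, contradicting what we just proved. Therefore $a\succ 0$ for all nonzero $a\in S$, so $0$ is the smallest element. I do not expect any serious obstacle here; the only point requiring a little care is handling the case $0\notin S$ cleanly and making sure the strict inequalities are genuinely strict (which follows from totality plus cancelation), so that the decreasing sequence really has no minimum.
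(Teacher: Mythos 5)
Your proposal is correct and follows essentially the same route as the paper: assuming $a\succ 2a$ and adding $a$ repeatedly produces the strictly decreasing sequence $a\succ 2a\succ 3a\succ\cdots$ with no smallest element, contradicting well-ordering. Your extra care about the case $a=2a$ and your derivation of the parenthetical remark (via totality and compatibility) are fine additions but do not change the argument.
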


\begin{proof} If for some $a\in S$ the condition $a \succ 2a$ holds, then  the sequence $a, 2a,\dots, na,\dots$ is strictly decreasing and does not contain a smallest element.
\end{proof}

\section{Lexicographic orders}

Lexicographic orders on finite dimensional real vector spaces are very important for us. Let us start with a formal definition, which works even for infinite dimensional real vector spaces.
\medskip

Consider a real vector space  $L$. Let $\{e_{\lambda}\},\lambda\in \Lambda$  be any basis in $L$, where $\Lambda$ is an index set. Choose any well-order on the
set $\Lambda$.

\begin{remark} Note that there are well-orders  on any set $\Lambda$. If the set $\Lambda$ is infinite such order could be very exotic; but, on set containing $n<\infty$ elements, all orders are in one-to-one correspondence with  all  enumerations of  elements in the set $\Lambda$ by indices $1\leq i\leq n$.
\end{remark}
 \medskip

 Using the chosen  well-order on $\Lambda$, one can define a total order  on $L$
compatible with addition. Each vector $v\in L$ has a unique representation  of the form
$v=\sum x_\lambda(v) e_\lambda$, where only finitely many coefficients $x_\lambda(v)$
are not equal to zero.

\begin{definition} Let $a=\sum x_\lambda(a)e_\lambda$ and $b=\sum
x_\lambda(b)e_\lambda$ be two vectors in the space $L$.
Let $\Lambda_{a,b}\subset \Lambda$ be the set of indices such that $x_\lambda(a)\neq
x_\lambda(b)$.  Denote by $\lambda_0$ the smallest
element in $\Lambda_{a,b}$. We say that $a$ is {\sl bigger} than $b$ in the lexicographic order associated with the well-ordered basis $\{e_\lambda\}$ of $L$  if $x_{\lambda_0}(a)>
x_{\lambda_0}(b)$.
\end{definition}
\medskip

\begin{definition} An order $\succ$ on a real vector space  $L$ is {\sl compatible} with multiplication on positive numbers if, for any $x,y \in L$ and any $\mu>0$ such that $x \succ y$, the relation $\mu x \succ\mu y$ holds.
\end{definition}
\medskip

The following Lemma is obvious:

\begin{lemma} Any lexicographic order   on a real vector space $L$ is compatible with addition and with  multiplication by positive numbers.
\end{lemma}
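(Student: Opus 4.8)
The plan is to verify the two claimed properties directly from the definition of the lexicographic order, treating compatibility with addition and compatibility with multiplication by positive scalars separately. In both cases the key observation is that all the relevant relations are controlled by a single coordinate, namely the coordinate indexed by the smallest element of the ``disagreement set'' $\Lambda_{a,b}$, so everything reduces to elementary facts about the usual order on $\mathbb{R}$.

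First I would treat compatibility with addition. Suppose $a \succ b$, and let $c = \sum x_\lambda(c) e_\lambda$ be arbitrary. The point is that $x_\lambda(a+c) - x_\lambda(b+c) = x_\lambda(a) - x_\lambda(b)$ for every $\lambda$, since coordinates are additive. Consequently the disagreement set is unchanged, $\Lambda_{a+c,\,b+c} = \Lambda_{a,b}$, so it has the same smallest element $\lambda_0$; and $x_{\lambda_0}(a+c) = x_{\lambda_0}(a) + x_{\lambda_0}(c) > x_{\lambda_0}(b) + x_{\lambda_0}(c) = x_{\lambda_0}(b+c)$. Hence $a+c \succ b+c$. (One should also note in passing that $\succ$ is a total order: trichotomy holds because for $a \neq b$ the set $\Lambda_{a,b}$ is nonempty and, being a subset of the well-ordered set $\Lambda$, has a least element, so exactly one of $a \succ b$, $b \succ a$ holds; transitivity follows by comparing least disagreement indices. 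This is presumably what ``obvious'' is meant to cover, so I would mention it only briefly.)

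Next I would treat compatibility with multiplication by a positive scalar $\mu > 0$. Again $x_\lambda(\mu a) = \mu x_\lambda(a)$, so $x_\lambda(\mu a) \neq x_\lambda(\mu b)$ if and only if $x_\lambda(a) \neq x_\lambda(b)$; thus $\Lambda_{\mu a,\,\mu b} = \Lambda_{a,b}$, with the same least element $\lambda_0$, and from $x_{\lambda_0}(a) > x_{\lambda_0}(b)$ and $\mu > 0$ we get $\mu x_{\lambda_0}(a) > \mu x_{\lambda_0}(b)$, i.e. $\mu a \succ \mu b$. This completes the verification.

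There is no real obstacle here — the statement is genuinely routine, which is why the paper calls it obvious. The only mild subtlety worth flagging is the infinite-dimensional bookkeeping: one must use that each vector has only finitely many nonzero coordinates (so the representations are well-defined) and that $\Lambda$ is \emph{well}-ordered rather than merely linearly ordered, since it is precisely well-ordering that guarantees the disagreement set $\Lambda_{a,b}$ has a least element and hence that $\lambda_0$ exists. Beyond that, the entire proof is the single remark that passing from $(a,b)$ to $(a+c, b+c)$ or to $(\mu a, \mu b)$ leaves the disagreement set and its least element fixed while preserving the strict inequality in that coordinate.
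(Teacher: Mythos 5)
Your proof is correct and is exactly the direct verification the paper has in mind when it declares the lemma obvious (the paper supplies no proof): the disagreement set and its least index are invariant under translation by $c$ and under scaling by $\mu>0$, and the strict inequality in that coordinate is preserved. Nothing further is needed.
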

\medskip

\begin{lemma}\label{lemma8} For any total order $\succ$ compatible with addition and multiplication by positive numbers  on a real vector space $L$ the set $L_{+}$, defined by the condition $x\in  L_{+}\Leftrightarrow x\succ 0$,
is convex. In particular this condition holds for any lexicographic order on $L$.
\end{lemma}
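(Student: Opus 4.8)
The plan is to show that $L_+$ is closed under taking convex combinations with rational (and then real) coefficients, using only the two structural hypotheses: compatibility with addition and compatibility with multiplication by positive scalars. First I would observe that $L_+$ is closed under addition by Lemma \ref{set G_+ 1} applied to the group $(L,+)$ — indeed $L$ is in particular a commutative group, so $L_+$ is a subsemigroup — and that $L_+$ is closed under multiplication by any fixed positive scalar $\mu$, since $x\succ 0$ implies $\mu x\succ \mu\cdot 0=0$ by compatibility with multiplication by positive numbers.

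Next I would take two points $x,y\in L_+$ and a parameter $t\in(0,1)$, and I want $tx+(1-t)y\in L_+$. If $t$ is rational, write $t=p/q$ with $p,q$ natural numbers, $0<p<q$; then $q\cdot(tx+(1-t)y) = px+(q-p)y$, which lies in $L_+$ because it is a sum of $p$ copies of $x$ and $q-p$ copies of $y$, all of which are in $L_+$, and $L_+$ is closed under addition. Applying compatibility with multiplication by the positive scalar $1/q$ to the relation $q(tx+(1-t)y)\succ 0$ gives $tx+(1-t)y\succ 0$, i.e. $tx+(1-t)y\in L_+$. So $L_+$ contains every rational convex combination of its points.

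The remaining step is to pass from rational to arbitrary real $t\in(0,1)$; this is the one place where the argument is not purely formal, and I expect it to be the main (mild) obstacle, since we have not assumed any topological closedness of $L_+$. The trick is to reduce to the two-dimensional plane spanned by $x$ and $y$ (or a line, if they are dependent) and argue by trichotomy: given irrational $t$, suppose for contradiction that $z:=tx+(1-t)y\notin L_+$, so either $z=0$ or $-z\in L_+$. The case $z=0$ is easily excluded (it would force $x$ and $y$ to be negatives of each other up to positive scaling, contradicting that both are in $L_+$, using part 3 of Lemma \ref{set G_+ 1}). In the case $-z\in L_+$, pick a rational $s\in(0,1)$ on the appropriate side of $t$ so that the point $w:=sx+(1-s)y$ is a positive combination of $z$ and one of $x,y$; concretely, if $s<t$ then $w$ lies on the segment between $y$ and $z$, hence $w$ is a positive multiple of a sum of a positive multiple of $(-(-z))$... — more cleanly: choose rationals $s_1<t<s_2$ in $(0,1)$; then $z$ is a positive convex combination of $w_1:=s_1x+(1-s_1)y$ and $w_2:=s_2x+(1-s_2)y$, both of which lie in $L_+$ by the rational case, so $z\in L_+$ by the rational-combination argument applied to $w_1,w_2$, contradicting $-z\in L_+$. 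This contradiction completes the proof, and the final sentence follows since lexicographic orders were already shown to be compatible with addition and with multiplication by positive numbers.
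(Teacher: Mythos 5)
Your opening observations already prove the lemma, and the rest of your argument is an unnecessary detour that moreover contains a slip. You correctly note that $L_+$ is closed under addition and that $x\succ 0$ implies $\mu x\succ \mu\cdot 0=0$ for \emph{every} positive real $\mu$ (the definition of compatibility with multiplication by positive numbers is not restricted to rational scalars). Hence for any real $\lambda\in(0,1)$ and $x,y\in L_+$ you have $\lambda x\succ 0$ and $(1-\lambda)y\succ 0$, and adding gives $\lambda x+(1-\lambda)y\succ 0$. That one line is the paper's entire proof; there is no need to treat rational coefficients first and then pass to irrational ones, because no topological or density argument is required.

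The slip occurs in your irrational case: you write $z=\alpha w_1+(1-\alpha)w_2$ with $\alpha=\frac{s_2-t}{s_2-s_1}$ and conclude $z\in L_+$ ``by the rational-combination argument applied to $w_1,w_2$''. But $t$ is irrational and $s_1,s_2$ are rational, so $\alpha$ is irrational, and the rational-combination argument does not apply — as written the step reduces the irrational case to another instance of the irrational case. The step is rescued only by observing that $\alpha w_1\in L_+$ and $(1-\alpha)w_2\in L_+$ as positive real scalar multiples of elements of $L_+$, whose sum lies in $L_+$; but that is exactly the direct argument that proves the lemma outright for $x,y,\lambda$ themselves, making the whole case analysis superfluous. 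I recommend replacing the proof by the one-line version.
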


\begin{proof} Since the order $\succ$ is compatible with addition and with  multiplication by positive numbers, for any two points $x,y \in L_+ $ and real number $0\leq \lambda\leq 1$, the set $L_+$ contains the point $\lambda x+(1-\lambda y)$.
\end{proof}

\begin{lemma}\label{lemma9} On any free commutative  group $G$, there is a total order compatible
with addition.
\end{lemma}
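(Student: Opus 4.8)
The plan is to embed the free commutative group $G$ into a real vector space and then restrict a lexicographic order. First I would fix a basis $\{e_\lambda\}_{\lambda\in\Lambda}$ of $G$ as a free commutative group, so that every element of $G$ is a unique finite integer combination $\sum n_\lambda e_\lambda$. Next I would form the real vector space $L = G\otimes_{\mathbb Z}\mathbb R$, which has $\{e_\lambda\}$ (now regarded as a real basis) as a basis; the natural map $G\to L$ sending $\sum n_\lambda e_\lambda$ to the same formal combination with real coefficients is injective precisely because $G$ is free (no torsion, and the $e_\lambda$ are $\mathbb Z$-independent, hence $\mathbb R$-independent). Thus $G$ sits inside $L$ as a subgroup.

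Then I would invoke the material already developed: choose any well-order on the index set $\Lambda$ (such well-orders exist by the well-ordering theorem, as noted in the Remark), and take the associated lexicographic order $\succ$ on $L$. By the Lemma stating that any lexicographic order on a real vector space is compatible with addition and with multiplication by positive numbers, $\succ$ is in particular a total order on $L$ compatible with addition. The restriction of a total order compatible with addition on $L$ to the subgroup $G$ is again a total order compatible with addition: totality and the addition-compatibility inequality $x\succ y\Rightarrow x+a\succ y+a$ are inherited verbatim by any subset closed under the relevant operations, and $G$ is closed under addition. This gives the desired order on $G$.

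The only genuine point requiring care — and the step I expect to be the main obstacle, though it is a mild one — is verifying that the embedding $G\hookrightarrow L$ is well defined and injective, i.e. that a nontrivial integer relation among the $e_\lambda$ in $G$ cannot become a valid real relation in $L$, and vice versa. This is exactly where the hypothesis that $G$ is a \emph{free} commutative group is used: freeness on the basis $\{e_\lambda\}$ means the only integer combination equal to $0$ in $G$ is the trivial one, and since the $e_\lambda$ are by construction an $\mathbb R$-basis of $L=G\otimes_{\mathbb Z}\mathbb R$, the same holds over $\mathbb R$, so no collapsing occurs. Everything else is a routine transfer of properties along the inclusion, and the construction also makes good on the promise in the text that the conditions of Lemma \ref{lemma1} are sufficient, not merely necessary, for the existence of an order compatible with addition.
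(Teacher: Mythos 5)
Your proposal is correct and follows essentially the same route as the paper: embed $G$ into $L = G\otimes_{\mathbb Z}\mathbb R$ and restrict a lexicographic order associated with a well-ordered basis. The extra care you take in checking that the embedding is injective is a detail the paper leaves implicit, but the argument is the same.
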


\begin{proof}  Any free commutative  group $G$ can be naturally embedded in the real vector space $L= G \otimes_{\mathbb Z} \mathbb R$. The lexicographic order on $L$  induces a total order
compatible with addition on any subgroup of $L$.
\end{proof}
\medskip

Thus we see that a total order compatible with addition on a commutative
semigroup $S$ exists if and only if $S$ satisfies the assumptions of Lemma
\ref{lemma1}.
\medskip

From now on, we will deal only with the lexicographic
order on real finite-dimensional vector spaces.

\begin{definition} We will call the lexicographic order associated with the  ordered basis of an $n$-dimensional real vector space $L$ the {\sl lexicographic order related to the coordinate system} $\mathbf x=(x_1,\dots,x_n)$, defined by the basis $e_1,\dots,e_n$. We will denote this order by the symbol $\succ_\mathbf x$.
\end{definition}

\medskip

Let us discuss the geometrical meaning of the  order  $\succ_\mathbf x$ on $L$ related to a coordinate system $\mathbf x=(x_1,\dots,x_n)$.
\medskip

\begin{definition} With the coordinate system $\mathbf x$  one associates a {\sl flag of subspaces} $L=L_0\supset L_1\supset \dots \supset L_n=0,$
where $L_1$ is defined by  the equation $x_1=0$; $L_2$ by the equations $x_1=x_2=0$; so on, up to $L_n$, defined by the equations $x_1=\dots=x_n=0$ (i.e. $L_n=0$).
\end{definition}
\medskip

With the coordinate system  $\mathbf x$, one associates  the collection of open half spaces $L_i^+\subset L_i$ of the space $L_i$;
with the boundary $L_{i+1}$ specified by the condition that $x_{i+1}>0$ on $L_i^+$
\medskip

The flag $L=L_0\supset L_1\supset \dots\supset L_n=0$, together with the collection of chosen half spaces $L_0^+,\dots L_{n-1}^+$, totally determines the lexicographic order~on~$L$.

\begin{definition} The set $X_{+} =\cup _{0\leq i <n}L^+_i$ we will the call {\sl $\mathbf x$-half space related to the coordinate system }$\mathbf x=(x_1,\dots,x_n)$.
\end{definition}
\medskip

The following two Lemmas are obvious.

\begin{lemma}  The $\mathbf x$-half space  $X_+$ of $L$  totally determines the lexicographic order on $L$ related to the coordinate system $\mathbf x=(x_1,\dots,x_n)$. Moreover the the identity $X_+=L_+(\mathbf x)$ holds, where $L_+(\mathbf x)$ is the set of points $a\in L$ such that $a\succ_\mathbf x 0$.
\end{lemma}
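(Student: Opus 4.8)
The plan is to reduce both claims to a single observation about the first nonzero coordinate of a vector.

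I begin with the identity $X_+ = L_+(\mathbf x)$. First note that $0$ lies in neither set: $0\not\succ_{\mathbf x}0$, and each $L_i^+$ is an open half-space disjoint from its boundary $L_{i+1}$, which contains $0$. So it suffices to treat a nonzero $a\in L$. Let $k$ be the least index with $x_k(a)\neq 0$; then $a\in L_{k-1}$, while $a\notin L_i$ for every $i\geq k$. Comparing $a$ with the origin, the smallest index at which their coordinates differ is exactly $k$, so $a\succ_{\mathbf x}0$ holds precisely when $x_k(a)>0$. On the other hand, $a$ cannot belong to $L_i^+$ for $i\geq k$ (since $L_i^+\subset L_i$ and $a\notin L_i$), nor to $L_i^+$ for $i\leq k-2$ (since $L_i^+$ is an open half-space in $L_i$ with boundary $L_{i+1}\supseteq L_{k-1}\ni a$, and such a half-space is disjoint from its boundary); hence $a\in X_+$ if and only if $a\in L_{k-1}^+$, which by the defining condition of $L_{k-1}^+$ means exactly $x_k(a)>0$. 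The two conditions coincide, so $X_+=L_+(\mathbf x)$.

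For the statement that $X_+$ determines the lexicographic order I would use that $\succ_{\mathbf x}$ is compatible with addition, as recorded above. Then for arbitrary $a,b\in L$ one has $a\succ_{\mathbf x}b \iff a-b\succ_{\mathbf x}0 \iff a-b\in L_+(\mathbf x)=X_+$, so the entire order relation is recovered from the single set $X_+$.

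I anticipate no real difficulty here; the only point needing a little care is the flag bookkeeping — verifying that a vector whose first nonzero coordinate is $x_k$ meets $X_+$ only inside $L_{k-1}^+$ — and this follows immediately from the observation that each open half-space $L_i^+$ avoids its boundary $L_{i+1}$.
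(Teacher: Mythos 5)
Your proof is correct; the paper states this lemma without proof as ``obvious,'' and your argument is exactly the intended unwinding of the definitions (the first nonzero coordinate $x_k(a)$ is positive iff $a\in L_{k-1}^+$, and then compatibility with addition recovers the full order from $X_+$). No gaps.
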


\begin{lemma} The orders $\succ_\mathbf x $ and $\succ_\mathbf y $  related to coordinate systems $\mathbf x=(x_1,\dots,x_n)$ and $\mathbf y=(y_1,\dots,y_n)$ on $L$ coincide if and only if the linear map $A:L\to L$ which transforms  the coordinate system $\mathbf x$  to the coordinate system $\mathbf y$  is given by an upper triangular matrix having positive entries on its main diagonal.
\end{lemma}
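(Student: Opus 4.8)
The plan is to prove both directions of the equivalence by translating the coincidence of the two lexicographic orders into concrete statements about the transition matrix $A$. Let me set up notation: if $\mathbf x=(x_1,\dots,x_n)$ is defined by the basis $e_1,\dots,e_n$ and $\mathbf y=(y_1,\dots,y_n)$ by the basis $f_1,\dots,f_n$, then $A$ is the linear map sending $e_i$ to $f_i$; equivalently, writing each $f_i$ in the $\mathbf x$-coordinates, $A$ has columns $f_1,\dots,f_n$ expressed in the $e$-basis. The two orders coincide iff $L_+(\mathbf x)=L_+(\mathbf y)$, i.e. iff for every $v\in L$ we have $v\succ_{\mathbf x}0 \iff v\succ_{\mathbf y}0$.

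First I would prove the "if" direction. Suppose $A=(a_{ij})$ is upper triangular with $a_{ii}>0$. The relation between the two coordinate systems is $x_j(v)=\sum_i a_{ji}\,y_i(v)$ — or more precisely, I should fix the exact index convention so that $x_j = a_{jj} y_j + (\text{terms } a_{ji} y_i \text{ with } i>j)$ because the flag $L_1\supset L_2\supset\cdots$ associated with $\mathbf x$ must contain the flag associated with $\mathbf y$ the right way; the clean way to see this is that upper-triangularity of $A$ is exactly the statement that $A$ preserves the flag $L_0\supset L_1\supset\cdots\supset L_n$ (since $L_i=\mathrm{span}(e_{i+1},\dots,e_n)$ and $A$ maps this into itself), and positivity of the diagonal says $A$ maps each half-space $L_i^+$ into itself. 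Then I would argue directly: take $v\neq 0$ with $v\succ_{\mathbf x}0$, let $i$ be the smallest index with $x_i(v)\neq0$, so $x_i(v)>0$ and $v\in L_{i-1}^+$. Since the $\mathbf y$-flag coincides as a flag with the $\mathbf x$-flag up to the triangular change (both $L_{i-1}$ and $L_i$ are the same subspaces), $v\in L_{i-1}$ forces $y_1(v)=\cdots=y_{i-1}(v)=0$, and $y_i(v)$ is obtained from $x_i(v),x_{i+1}(v),\dots$ by the relevant row of $A^{-1}$, whose relevant leading entry is $1/a_{ii}>0$, giving $y_i(v)>0$, hence $v\succ_{\mathbf y}0$. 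By symmetry (the inverse of an upper-triangular matrix with positive diagonal is again of that form) we get the reverse implication, so $L_+(\mathbf x)=L_+(\mathbf y)$.

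For the "only if" direction, suppose $\succ_{\mathbf x}$ and $\succ_{\mathbf y}$ coincide. I would recover the structure of $A$ flag-level by flag-level, descending through the flag. The key observation is that the subspaces $L_i$ can be characterized order-theoretically once the order on $L$ is fixed together with the compatible multiplication by positive scalars: indeed $L_1$ is the unique hyperplane such that $L_1$ and its complement split into the two open half-spaces $L_1^{\pm}$ consisting of points that are "infinitesimally small" relative to $L_0^+$ — more concretely, $L_i = \{v : v \text{ and } -v \text{ are both} \prec \varepsilon e_i \text{ for the appropriate leading coordinate}\}$; the cleanest rigorous statement is that $L_i$ is the set of $v$ such that for all $t>0$ one has $t e_i \succ v$ and $t e_i \succ -v$ when $i$ is the first index where... — here I must be careful to phrase it purely in terms of $\succ$. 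Having established that the flag is determined by the order, $L_i(\mathbf x)=L_i(\mathbf y)$ for all $i$, which is precisely upper-triangularity of $A$ with respect to the (shared) basis-filtration; and since $A$ must send $L_{i-1}^+\cap(L_{i-1}\setminus L_i)$ — the part of the half-space at level $i-1$ — into the corresponding set for $\mathbf y$, and on the quotient $L_{i-1}/L_i$ the map induced by $A$ is multiplication by $a_{ii}$, preservation of the positive half-line forces $a_{ii}>0$.

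The main obstacle I anticipate is the "only if" direction — specifically, giving a clean, correct order-theoretic characterization of the flag $L_0\supset L_1\supset\cdots$ purely in terms of the order $\succ$ (without secretly using the coordinates), so that "the orders coincide" genuinely forces "the flags coincide." One workable route: define, for a nonzero $v$, its "level" as the largest $i$ such that $|v|:=\max(v,-v)$ (in the order) satisfies $|v|\prec t\,e_j$ for every $t>0$ whenever $j<i$... this still references $e_j$. A genuinely intrinsic route is to use the Archimedean-equivalence relation: say $u\sim v$ if there exist positive integers $m,n$ with $|u|\prec m|v|$ and $|v|\prec n|u|$; the equivalence classes are totally ordered, and $L_i\setminus L_{i+1}$ (together with $0$) is exactly $\{0\}$ together with the union of the top $n-i$ Archimedean classes — this is defined from $\succ$ alone, so coinciding orders give coinciding $L_i$'s. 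I would present this Archimedean-class argument as the heart of the proof, then read off upper-triangularity and positivity of the diagonal as above. Everything else is routine bookkeeping with the transition matrix, which I would not spell out in full.
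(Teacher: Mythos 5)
The paper states this lemma without proof (it is introduced with ``The following two Lemmas are obvious''), so there is no argument of record to compare against; judged on its own, your proposal is essentially correct and supplies the missing content. The real substance is the ``only if'' direction, and your Archimedean-equivalence idea is the right intrinsic characterization: for a lexicographic order the class of a nonzero $v$ under ``$u\sim v$ iff $|u|\prec m|v|$ and $|v|\prec n|u|$ for some positive integers $m,n$'' is exactly the set of vectors with the same leading index, so the classes recover $L_{i-1}\setminus L_i$ and hence the whole flag from $\succ$ alone; coinciding orders then force coinciding flags (triangularity), and comparing the induced positive rays on each one-dimensional quotient $L_{i-1}/L_i$ forces the diagonal entries to be positive. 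Two small points to tighten when writing this up: (i) your sentence ``$L_i\setminus L_{i+1}$ (together with $0$) is exactly $\{0\}$ together with the union of the top $n-i$ Archimedean classes'' conflates two statements --- $L_i\setminus L_{i+1}$ is a \emph{single} class, while $L_i$ itself is $\{0\}$ together with the union of the $n-i$ most infinitesimal classes; (ii) whether the resulting matrix is upper or lower triangular depends on whether you record the basis change or the coordinate change (they are transposes of one another), so you must fix the convention once and carry it through both directions --- as you note, the ``if'' direction is then just the observation that such a matrix preserves each $L_i$ and each half-space $L_i^+$, and that its inverse is of the same form. Neither issue is a gap in the idea, only in the bookkeeping you deliberately left informal.
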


\medskip

 For any coordinate system $\mathbf x$  on $L$ let $L_{-}(\mathbf x)$ be the set  defined by the following condition: $x\in L_{-}(\mathbf x)\Leftrightarrow -x\in L_{+}(\mathbf x)$ (where $L_+(\mathbf x)$ is the $\mathbf x$-half space of $L$).

\begin{lemma} \label{property of order 1} For any coordinate system $\mathbf x$  on $L$  the sets $L_+(\mathbf x)$ and $L_-(\mathbf x)$ are convex. These sets satisfy the following conditions:

$$L_{+}(\mathbf x)\cup L_{-}(\mathbf x)= L\setminus \{0\},$$
$$L_{+}(\mathbf x)\cap L_{-}(\mathbf x)=\emptyset.$$
\end{lemma}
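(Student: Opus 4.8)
The plan is to deduce everything from the description of $L_+(\mathbf x)$ as the $\mathbf x$-half space $X_+=\bigcup_{0\le i<n}L_i^+$ together with the Lemma already established (the previous Lemma) that $L_+(\mathbf x)=L_+(\mathbf x)$ in the sense $x\in L_+(\mathbf x)\Leftrightarrow x\succ_{\mathbf x}0$. Convexity of $L_+(\mathbf x)$ is immediate: by Lemma~\ref{lemma8} the positive cone of any order compatible with addition and with multiplication by positive numbers is convex, and $\succ_{\mathbf x}$ is such an order (it is a lexicographic order). For $L_-(\mathbf x)$, note that $x\in L_-(\mathbf x)\Leftrightarrow -x\in L_+(\mathbf x)$, so $L_-(\mathbf x)=-L_+(\mathbf x)$; the image of a convex set under the linear map $v\mapsto -v$ is convex, hence $L_-(\mathbf x)$ is convex.

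Next I would establish the two set-theoretic identities. For $L_+(\mathbf x)\cap L_-(\mathbf x)=\emptyset$: if $x$ lay in the intersection, then both $x\succ_{\mathbf x}0$ and $-x\succ_{\mathbf x}0$. Adding $x$ to both sides of $-x\succ_{\mathbf x}0$ (using compatibility with addition) gives $0\succ_{\mathbf x}x$, contradicting $x\succ_{\mathbf x}0$ since $\succ_{\mathbf x}$ is a total (hence antisymmetric) order. For $L_+(\mathbf x)\cup L_-(\mathbf x)=L\setminus\{0\}$: the order $\succ_{\mathbf x}$ is total, so for any $x\neq 0$ exactly one of $x\succ_{\mathbf x}0$ or $0\succ_{\mathbf x}x$ holds; in the first case $x\in L_+(\mathbf x)$, and in the second case adding $-x$ to both sides gives $-x\succ_{\mathbf x}0$, i.e. $x\in L_-(\mathbf x)$. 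Conversely $0\notin L_+(\mathbf x)$ since $0\succ_{\mathbf x}0$ is false, and likewise $0\notin L_-(\mathbf x)$, so neither set contains the origin. Together these give the stated partition of $L\setminus\{0\}$. (Alternatively, one can argue purely geometrically: the half-spaces $L_i^+$ are the two open pieces into which the hyperplane $L_{i+1}$ cuts $L_i$, so $L_i=L_i^+\sqcup L_{i+1}\sqcup(-L_i^+)$, and unwinding the flag gives $L\setminus\{0\}=X_+\sqcup(-X_+)$ as a disjoint union.)

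The only point requiring genuine care is convexity of $X_+=\bigcup_{0\le i<n}L_i^+$ viewed directly as a union of half-spaces of nested subspaces — a union of convex sets is generally not convex, so one cannot argue naively. This is exactly why I prefer to route through Lemma~\ref{lemma8}, which hands us convexity of $L_+(\mathbf x)$ for free once we know $L_+(\mathbf x)$ is the positive cone of a lexicographic order. If one instead wanted the direct geometric proof, the key observation is that for $x\in L_i^+$ and $y\in L_j^+$ with $i\le j$, the segment $[x,y]$ stays in $L_i^+$: its points have the same first $i$ coordinates as $x$ (namely zero for the first $i$ coordinates and a strictly positive combination for the $(i+1)$-st, since $y$ has nonnegative $(i+1)$-st coordinate). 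That is the main — and essentially the only nontrivial — step; everything else is formal manipulation with the axioms of a total order compatible with addition.
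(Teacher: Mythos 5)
Your proof is correct, and it follows exactly the route the paper intends: the paper states this lemma without proof, but Lemma~\ref{lemma8} already asserts convexity of $L_+(\mathbf x)$ for any lexicographic order, $L_-(\mathbf x)=-L_+(\mathbf x)$ is convex as a linear image of a convex set, and the two partition identities are formal consequences of totality, irreflexivity, and compatibility with addition. Your parenthetical direct verification that a segment joining $L_i^+$ and $L_j^+$ ($i\le j$) stays in $L_i^+$ is also correct and is the right observation if one wants to avoid invoking Lemma~\ref{lemma8}.
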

\medskip

Let us formulate two  Theorems  \ref{geometry of order 1}, \ref{geometry of order 2} which  geometrically characterize lexicographic orders on $L$ without using coordinate systems. We will prove these theorems in the section \ref{subsec6.1}
\medskip

\begin{theorem}\label{geometry of order 1} Let $\succ $ be a total order on a real  finite-dimensional vector space $L$ that is compatible with addition and with multiplication by positive numbers. Then $\succ$ is the lexicographic order $\succ_\mathbf x $ related to some coordinate system $\mathbf x$ on $L$.
\end{theorem}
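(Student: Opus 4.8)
The plan is to reconstruct the coordinate system $\mathbf{x}$ inductively, peeling off one coordinate at a time. By Lemma \ref{lemma8}, the set $L_+ = \{x : x \succ 0\}$ is a convex cone (it is closed under addition and under multiplication by positive scalars), and by the analogue of Lemma \ref{property of order 1} its complement decomposes as $L_+ \sqcup (-L_+) \sqcup \{0\} = L$. The key observation is that $L_+$ is one of the two open half-spaces cut out by a hyperplane $H_1 \subset L$, \emph{or} $L_+$ contains a whole hyperplane. More precisely: I will show that the linear span of the ``boundary behaviour'' of $L_+$ produces a hyperplane $L_1$ such that, on one side of $L_1$, every point is $\succ 0$, on the other side every point is $\prec 0$, and the order restricted to $L_1$ is again a total order compatible with addition and positive scalar multiplication. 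Then the theorem follows by induction on $\dim L$, with the base case $\dim L = 1$ being immediate (the only such orders on $\mathbb{R}$ are $>$ and $<$).

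The heart of the argument is producing the hyperplane $L_1$ and the half-space $L_0^+$. Here is where the separation theorem advertised in the introduction enters. Consider the convex cone $L_+$. I claim there is a nonzero linear functional $\ell_1$ on $L$ such that $\ell_1 > 0$ on $L_+$ and $\ell_1 < 0$ on $-L_+$ — equivalently, $\ell_1 \geq 0$ on $L_+$ — and moreover $\ell_1$ does not vanish identically on $L_+$. To get $\ell_1$: take any point $a \notin \overline{L_+}$ with $a \neq 0$; if $\overline{L_+} \neq L$ this is possible, and the separation theorem gives a functional weakly separating $a$ from $\overline{L_+}$; since $L_+$ is a cone the separating functional can be taken to have $0$ on one side, i.e. $\ell_1 \geq 0$ on $L_+$. (If instead $\overline{L_+} = L$, one argues that $L_+$ together with $\{0\}$ and $-L_+$ tile $L$, and a convex cone whose closure is all of $L$ but which omits $0$ and whose negative is disjoint from it must still lie in a closed half-space — this edge case needs a short separate argument, essentially that $L_+ \cup \{0\}$ is a convex cone not equal to $L$ because $0 \notin L_+$, hence it is contained in a closed half-space by separation applied to any interior point of $-L_+$.) Set $L_1 = \ker \ell_1$ and let $L_0^+ = \{x : \ell_1(x) > 0\}$. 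Then $L_0^+ \subseteq L_+$ automatically, and $-L_0^+ \subseteq -L_+$ is disjoint from $L_+$, so the order on the open half-space $L_0^+$ agrees with ``$\ell_1$-positive,'' which is exactly the first step of a lexicographic order.

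To complete the inductive step I must check that $\succ$ restricted to $L_1$ is again a total order compatible with addition and positive scalar multiplication: this is immediate since $L_1$ is a linear subspace and the restriction of a compatible total order is one. By induction there is a coordinate system $(x_2, \dots, x_n)$ on $L_1$ with $\succ|_{L_1} \;=\; \succ_{(x_2,\dots,x_n)}$. Extend to a coordinate system $\mathbf{x} = (x_1, x_2, \dots, x_n)$ on $L$ by choosing $x_1$ to be (a positive multiple of) $\ell_1$ and lifting $x_2, \dots, x_n$ arbitrarily. One then verifies directly from the definitions that $\succ \;=\; \succ_{\mathbf{x}}$: given $a \succ b$, either $\ell_1(a) \neq \ell_1(b)$, in which case $a - b \in L_0^+$ or $-L_0^+$ forces $x_1(a) > x_1(b)$ or $x_1(a) < x_1(b)$ matching the sign of $a-b$; or $\ell_1(a) = \ell_1(b)$, in which case $a - b \in L_1$ and we invoke the inductive hypothesis on $L_1$. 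The main obstacle I anticipate is the careful handling of the non-closed case $\overline{L_+} = L$ — one must use precisely the version of the separation theorem valid for arbitrary (non-closed) convex sets that the paper has set up, applied to the convex cone $L_+ \cup \{0\}$ or to its complement, to extract the separating functional; the rest of the argument is routine bookkeeping with the definition of lexicographic order.
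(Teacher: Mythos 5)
Your argument is correct in substance but takes a genuinely different route from the paper. The paper deduces Theorem~\ref{geometry of order 1} from the more general Theorem~\ref{convex hull and order}: since $L_+$ is convex (Lemma~\ref{lemma8}) and omits the origin, the non-closed version of the Separation Theorem (Theorem~\ref{thSepar}) places $L_+$ inside some lexicographic half-space $L_+(\mathbf x)$, and the partition $L_+\sqcup(-L_+)=L\setminus\{0\}$ then forces $L_+=L_+(\mathbf x)$. You instead run the dimension induction directly on the order: extract one supporting functional $\ell_1$ with $\ell_1\geq 0$ on the cone $L_+$, note that $\{\ell_1>0\}\subseteq L_+$ because any point with $\ell_1>0$ cannot lie in $-L_+$, restrict $\succ$ to $\ker\ell_1$, and induct. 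This is essentially the same induction that the paper performs inside the proof of Theorem~\ref{thSepar} (peel off a support hyperplane, recurse on the slice), but specialized to $\Delta=L_+$ and $a=0$, where the three-way partition of $L$ makes every step cleaner: you only ever need the classical separation theorem for \emph{closed} convex sets, and the restriction to $\ker\ell_1$ is automatically again an order of the same type. What you lose is the general Theorem~\ref{convex hull and order} for arbitrary subgroups $G\subset L$, which the paper needs later anyway; what you gain is a self-contained proof that bypasses the non-closed separation theorem entirely.

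One step needs repair: the existence of a point $a\neq 0$ with $a\notin\overline{L_+}$, i.e.\ the claim $\overline{L_+}\neq L$. Your proposed fallback for the case $\overline{L_+}=L$ is muddled ($L_+\cup\{0\}$ is not ``$\neq L$ because $0\notin L_+$'', and if $\overline{L_+}=L$ then classical separation cannot be applied to any point at all). The clean fix is to show the case is vacuous: since $L_+\cup(-L_+)\cup\{0\}=L$, the set $L_+$ spans $L$, so its affine hull is $L$ and hence $L_+$ (being convex) has nonempty interior; by symmetry so does $-L_+$, and any interior point of $-L_+$ has a neighbourhood disjoint from $L_+$, hence lies outside $\overline{L_+}$. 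Now apply classical separation to that point and use positive homogeneity of $L_+$ to conclude $\ell_1\geq 0$ on $L_+$. A small slip elsewhere: ``$\ell_1>0$ on $L_+$'' is \emph{not} equivalent to ``$\ell_1\geq 0$ on $L_+$'' (for $n\geq 2$ the functional $\ell_1$ necessarily vanishes on the nonempty set $L_+\cap\ker\ell_1$); only the weak inequality is available, and fortunately only the weak inequality is used in your verification that $\succ\,=\,\succ_{\mathbf x}$.
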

\medskip

\begin{remark} On a real vector space $L$ of any dimension $n>0$ there are a many total orders compatible with addition which are not lexicographic orders related to some coordinate system (over real numbers). Indeed, one can consider $L$ as a free commutate group with respect to addition and use an (exotic) order on it such as that described in Lemma \ref{lemma9}
\end{remark}
\medskip

\begin{theorem}\label{geometry of order 2}  Let $X_+\subset L$  be a convex set and let $X_{-}$ be the set defined by condition  $x\in X_{-}\Leftrightarrow -x\in L_{+}$. Assume that the following conditions hold:
$$X_{+}\cup X_{-}= L\setminus \{0\},$$
$$X_{+}\cap X_{-}=\emptyset.$$
 Then there is a (unique) coordinate system $\mathbf x$ on $L$, such that $X_{+}=L_{+}(\mathbf x)$, where $L_{+}(\mathbf x)$ is the $\mathbf x$-half space of $L$.
 \end{theorem}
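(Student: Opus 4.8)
The plan is to induct on $n=\dim L$ and reconstruct the coordinate system one hyperplane at a time, mirroring the flag structure described before Theorem~\ref{geometry of order 2}. The base case $n=1$ is immediate: $L\setminus\{0\}$ has exactly two rays, and the hypotheses force $X_+$ to be one open ray and $X_-$ the other, so a single coordinate $x_1$ (positive on the $X_+$ ray) does the job. For the inductive step I would first show that $X_+$ contains no pair of opposite nonzero vectors $\{v,-v\}$ simultaneously (immediate from $X_+\cap X_-=\emptyset$ together with the defining relation of $X_-$), and dually that it is not missed by any such pair (from $X_+\cup X_-=L\setminus\{0\}$). The key geometric claim is then: \emph{the closure $\overline{X_+}$ is a closed half-space}, i.e.\ there is a nonzero linear functional $\ell$ with $\{\ell>0\}\subseteq X_+\subseteq\{\ell\ge 0\}$. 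Granting this, set $x_1=\ell$, let $L_1=\{x_1=0\}$, and observe that $X_+\cap L_1$ is a convex subset of $L_1$ whose union with its negative is $L_1\setminus\{0\}$ and whose intersection with that negative is empty — so the inductive hypothesis applies to $L_1$ and yields coordinates $x_2,\dots,x_n$ with $X_+\cap L_1=L_+(\mathbf x')$ on $L_1$. One then checks that $X_+=\{x_1>0\}\cup\bigl(\{x_1=0\}\cap L_+(\mathbf x')\bigr)$, which is exactly the $\mathbf x$-half space for $\mathbf x=(x_1,x_2,\dots,x_n)$; uniqueness follows from the earlier lemma characterizing when two coordinate systems give the same order (upper-triangular change of basis with positive diagonal).

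The main obstacle is proving that $\overline{X_+}$ is a half-space, and this is where I expect to lean on the promised convex-geometry tools (Carath\'eodory and the general separation theorem for not-necessarily-closed convex sets). Here is how I would argue it. Since $X_+$ is convex, $\overline{X_+}$ is a closed convex cone once we know $X_+$ is a cone — and it is: for $v\in X_+$ and $\mu>0$ we want $\mu v\in X_+$, which follows because $\mu v\notin X_-$ (else $-\mu v\in X_+$, and then $v=\frac{1}{\mu}(\mu v)$ and $-v$ would be related in a way contradicting $X_+\cap X_-=\emptyset$ via convexity — more cleanly, $0=\frac{1}{\mu+1}(\mu v)+\frac{\mu}{\mu+1}(-v)\notin X_+$ forces a contradiction once both endpoints are shown to lie in $X_+$). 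So $\overline{X_+}$ is a closed convex cone, and $\overline{X_-}=-\overline{X_+}$. Their union is dense in $L$, so $\overline{X_+}\cup\overline{X_-}=L$. A closed convex cone $C$ with $C\cup(-C)=L$ must be a half-space: if not, $C$ is contained in some proper subspace or $C\cap(-C)$ (the lineality space) has codimension $\ge 2$, and in either case one produces a two-dimensional section of $L$ in which $C$ and $-C$ are closed convex cones failing to cover the plane — a direct contradiction. (Concretely: pick a supporting functional $\ell$ of $\overline{X_+}$ at a boundary point via the separation theorem; show $\{\ell<0\}\cap\overline{X_+}=\emptyset$, hence $\{\ell<0\}\subseteq\overline{X_-}$, hence by symmetry $\{\ell>0\}\subseteq\overline{X_+}$; then $\{\ell>0\}\subseteq X_+$ because any $v$ with $\ell(v)>0$ cannot be in $X_-\subseteq\{\ell\le 0\}$.)

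Once the half-space fact is in hand, the bookkeeping to descend to $L_1$ is routine but worth stating: $X_+\cap L_1$ is convex (intersection of convex sets); for $v\in L_1\setminus\{0\}$ we have $\ell(v)=0$, so $v\notin\{\ell>0\}$ and $v\notin\{\ell<0\}$, whence exactly one of $v,-v$ lies in $X_+$ and that one lies in $X_+\cap L_1$ — this gives the two defining conditions of Theorem~\ref{geometry of order 2} on $L_1$. After applying the inductive hypothesis, the identification $X_+=L_+(\mathbf x)$ is a matter of unwinding the definition of the $\mathbf x$-half space $X_+=\bigcup_{0\le i<n}L_i^+$, where $L_0^+=\{x_1>0\}$ and the remaining $L_i^+$ come from the coordinate system on $L_1$. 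Finally, for uniqueness: any two coordinate systems realizing the same $X_+$ induce the same lexicographic order by the lemma stating $X_+=L_+(\mathbf x)$ determines $\succ_{\mathbf x}$, and by the subsequent lemma the two coordinate systems then differ by an upper-triangular matrix with positive diagonal — but the statement asks for uniqueness of the coordinate system itself only up to such transformations, which is precisely what "unique" means here in the sense that the flag and the chosen half-spaces $L_i^+$ are uniquely determined.
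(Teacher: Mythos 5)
Your proof is essentially correct, but it takes a different route from the paper's. The paper proves this theorem in a few lines: since $X_+$ is convex and $0\notin X_+$, its version of the Separation Theorem for not-necessarily-closed convex sets (Theorem~\ref{thSepar}, applied with $\Delta=X_+$ and $a=0$) immediately produces a coordinate system $\mathbf x$ with $X_+\subseteq L_+(\mathbf x)$; the reverse inclusion then follows from exactly the antipodal argument you also use (a point of $L_+(\mathbf x)\setminus X_+$ would lie in $X_-$, so its negative would lie in $X_+\subseteq L_+(\mathbf x)$ and simultaneously in $L_-(\mathbf x)$, a contradiction). You instead run an induction on $\dim L$, proving along the way that $X_+$ is a cone and that $\overline{X_+}$ is a closed half-space, then recursing into the hyperplane $\{x_1=0\}$. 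This is in effect an inlined re-proof of Theorem~\ref{thSepar} in the special case $a=0$ with the extra symmetry hypothesis: the paper's own proof of Theorem~\ref{thSepar} proceeds by the very same peel-off-a-supporting-hyperplane induction. What your version buys is self-containedness and the explicit intermediate fact $\{\ell>0\}\subseteq X_+\subseteq\{\ell\ge 0\}$, which makes the flag structure visible; what it costs is length, since the paper gets the whole coordinate system in one invocation of an already-proved theorem. Your argument that a proper closed convex cone $C$ with $C\cup(-C)=L$ is a half-space is sound (supporting functional $\ell$ at a boundary point gives $C\subseteq\{\ell\ge 0\}$, hence $\{\ell<0\}\subseteq -C$, hence $\{\ell>0\}\subseteq C$, and closedness gives equality), and your uniqueness remark correctly identifies that ``unique'' can only mean unique up to upper-triangular changes of coordinates with positive diagonal, which the paper itself never addresses. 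One small slip: in the cone step, the convex combination you should exhibit is $0=\tfrac{\mu}{1+\mu}\,v+\tfrac{1}{1+\mu}\,(-\mu v)$, whose endpoints $v$ and $-\mu v$ are the two points known to lie in $X_+$ under the contradiction hypothesis; the combination you wrote uses $-v$, which is \emph{not} in $X_+$. The fix is one line and does not affect the structure of the proof.
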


\section{Orders  on subgroups and  subsemigroups of real numbers}

First, we will consider results for $n=1$. Let $G\subset \mathbb R$ be an additive  subgroup of $\mathbb R$ equipped with some total order $\succ$ compatible with addition. As above,  we  denote by $G_+$a semigroup $G_+\subset G$ containing all points $a\in G$ such that $a \succ 0$.

\begin{lemma}\label{convex hull 1} If the convex hull $\Delta(G_+)$ of the  set $G_+$ does not contain the origin, then the order $ \succ$ is either induced by the natural order on the line of real numbers, or by the opposite order, i.e. $a \succ b$ if and only if $a<b$.
\end{lemma}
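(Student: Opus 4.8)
The plan is to translate the hypothesis --- that $\Delta(G_+)$ misses the origin --- into a statement about the sign structure of $G_+$ inside $\mathbb{R}$, and then invoke the trichotomy properties of $G_+$ from Lemma \ref{set G_+ 1}. First I would observe that $G_+ \subset \mathbb{R}$ is a semigroup not containing $0$ whose convex hull omits $0$; since $\mathbb{R}$ is one-dimensional, the convex hull $\Delta(G_+)$ is an interval, and an interval missing $0$ lies entirely in $(0,\infty)$ or entirely in $(-\infty,0)$. Hence either $G_+ \subset (0,\infty)$ or $G_+ \subset (-\infty,0)$.

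Next I would connect this to the order. By property 3 of Lemma \ref{set G_+ 1}, for every nonzero $x \in G$ exactly one of $x, -x$ lies in $G_+$. Suppose $G_+ \subset (0,\infty)$. Then for any $a, b \in G$ with $a \succ b$, we have $a - b \in G_+$, so $a - b > 0$ in the real order, i.e.\ $a > b$; conversely if $a > b$ in $\mathbb{R}$ then $a - b$ is a positive real element of $G$, and it cannot be that $-(a-b) \in G_+$ (as $-(a-b) < 0$ would contradict $G_+ \subset (0,\infty)$), so $a - b \in G_+$ and $a \succ b$. Thus $\succ$ coincides with the natural order. Symmetrically, if $G_+ \subset (-\infty,0)$, the same argument with signs reversed shows $a \succ b \iff a < b$, the opposite order.

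The only genuinely delicate point is the very first step: arguing that the convex hull of $G_+$ in $\mathbb{R}$ is an interval and that an interval avoiding $0$ is contained in one of the two open half-lines. This is elementary --- convexity in $\mathbb{R}$ is precisely the interval property, and connectedness of an interval forbids it from straddling $0$ while avoiding it --- but it is worth stating carefully, since it is exactly where the hypothesis $0 \notin \Delta(G_+)$ is used. Everything after that is a routine unwinding of the definition $x \succ y \iff x - y \in G_+$ together with the trichotomy, so I do not anticipate any real obstacle there.
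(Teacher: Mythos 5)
Your proof is correct and follows essentially the same route as the paper's: both first deduce from $0\notin\Delta(G_+)$ that $G_+$ lies entirely in one open half-line (the paper phrases this as "two points of opposite sign would force $0$ into the convex hull," you phrase it via connectedness of the interval $\Delta(G_+)$ — the same observation), and both then use the trichotomy property of $G_+$ to identify $\succ$ with the natural or reversed order. No gaps; nothing further to add.
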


\begin{proof} The semigroup $G_+$ can contain only positive points, or only negative points. Indeed, if $a>0$ and $b<0$ are contained in $G_+$ then $\Delta (G_+)$ contains the origin, which is not possible. If $G_+$ contains only positive points, then the intersection of the open ray $x>0$ with $G$ is equal to $G_+$. Indeed, assume that   a point $a \in G$  does not belong to $G_+$, but  belongs to the ray. Then, the negative point $-a$ must belong to $G_+$, which contradicts our assumption. Thus,  $a \succ b$ if and only if $a-b>0$.
\medskip

If $G_+$  belongs to the negative ray $x<0$, then similar arguments show that $a \succ b$ if and only if $a-b<0$.
\end{proof}

\begin{lemma}\label{rational group 1} If a semigroup $S\subset \mathbb Q\subset \mathbb R$ contains  only rational points, then the convex hull $\Delta(S)$ contains the origin if and only if the origin belongs~to~$S$.
\end{lemma}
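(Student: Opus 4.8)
The plan is to prove the two implications separately. The implication ``$0\in S \Rightarrow 0\in \Delta(S)$'' is immediate, since $S\subseteq \Delta(S)$ by the definition of the convex hull. So the real content is the converse, which I would prove by contradiction: assume $0\in\Delta(S)$ but $0\notin S$, and derive a contradiction.

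Under this assumption I would first observe that $S$ cannot be contained in the open ray $x>0$. Indeed, every finite convex combination $\lambda_1 s_1+\dots+\lambda_k s_k$ of strictly positive numbers is at least $\min_i s_i>0$, so the origin could not belong to $\Delta(S)$. The symmetric argument rules out $S\subseteq\{x<0\}$. (Equivalently, one may invoke the Caratheodory Theorem in $\mathbb R^1$ to write $0=\lambda a+(1-\lambda)b$ with $a,b\in S$ and $\lambda\in[0,1]$; if $\lambda\in\{0,1\}$ then already $0\in S$, and otherwise $a$ and $b$ must have opposite signs.) In either case there exist $a,b\in S$ with $a>0$ and $b<0$.

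Now I would use the rationality hypothesis. Write $a=p/q$ and $-b=r/s$ with $p,q,r,s$ positive integers, and put $n=rq$ and $m=ps$. Then $na=rp$ and $mb=ps\cdot(-r/s)=-pr$, so that $na+mb=0$. Since $S$ is a semigroup, the $n$-fold sum $na$ and the $m$-fold sum $mb$ both lie in $S$, and therefore so does their sum $na+mb=0$, contradicting $0\notin S$. Hence $0\in S$, which completes the proof.

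I do not expect a genuine obstacle here; the only delicate point is precisely the use of rationality to manufacture the integer relation $na+mb=0$. Over the reals this step fails --- for instance the subsemigroup of $\mathbb R$ generated by $1$ and $-\sqrt2$ contains points of both signs, hence the origin in its convex hull, yet it does not contain the origin --- so the hypothesis that $S$ consists only of rational points is genuinely needed and must be invoked exactly at this step.
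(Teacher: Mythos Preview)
Your proof is correct and follows essentially the same route as the paper: show that $0\in\Delta(S)$ forces $S$ to contain some $a>0$ and some $b<0$, then use the rationality of the ratio $a/b$ to produce positive integers $n,m$ with $na+mb=0\in S$. The paper argues directly rather than by contradiction, writing $\lambda/\mu=-p/q$ and concluding $q\lambda+p\mu=0$, but the content is identical, and your remark that the step fails over $\mathbb{R}$ (e.g.\ for the semigroup generated by $1$ and $-\sqrt{2}$) is a correct and useful observation.
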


\begin{proof} If $0\in \Delta(S)$, then $S$ contains some positive point $\lambda>0$ and some negative point $\mu<0$. The ratio $\dfrac{\lambda}{\mu}$ is a negative rational number, so $\dfrac{\lambda}{\mu}=-\dfrac {p}{q}$ where $p$ and $q$ are natural numbers. We have that  $q \lambda +\mu p=0$, which means that the semigroup $S$ with the points $\lambda, \mu$ contains the origin.
\end{proof}

\begin{theorem} Let $G\subset \mathbb Q\subset \mathbb R$
be an additive group which contains only points
 with rational coordinates. Then there are exactly
 two total orders on $G$ compatible with addition:
 the natural order ($a \succ b$ if $a>b$) and the reverse order ($a \succ b$ if $a<b$).
 \end{theorem}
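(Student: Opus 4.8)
The plan is to combine the two preceding lemmas with the earlier result on groups in $\mathbb{R}$ whose positive cone has convex hull avoiding the origin. Concretely, let $G \subset \mathbb{Q}$ be an additive subgroup and let $\succ$ be any total order on $G$ compatible with addition, with positive cone $G_+ = \{a \in G : a \succ 0\}$. The goal is to show that the hypothesis of Lemma \ref{convex hull 1} is automatically satisfied here, so that the conclusion of that lemma (the order is the natural order or its reverse) applies.

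The key step is this: since $G_+ \subset \mathbb{Q}$, Lemma \ref{rational group 1} tells us that $0 \in \Delta(G_+)$ if and only if $0 \in G_+$. But by property (2) of Lemma \ref{set G_+ 1}, zero is never in $G_+$ (here I apply Lemma \ref{set G_+ 1} with the group $G$ itself). Hence $0 \notin \Delta(G_+)$. Now Lemma \ref{convex hull 1} applies directly and yields that $\succ$ is either the natural order or the reverse order. That shows there are at most two such orders.

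It remains to observe that these two orders are genuinely distinct and genuinely do exist (assuming $G \neq \{0\}$; if $G = \{0\}$ the statement is vacuous or trivial, and one might note this). The natural order on $\mathbb{R}$ restricted to $G$ is compatible with addition, and so is its reverse; if $G$ contains a nonzero element $a$, then $a \succ 0$ under one of them and $a \prec 0$ under the other, so they are different orders. This completes the count: exactly two.

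I expect no serious obstacle here; the content of the theorem is essentially the conjunction of Lemmas \ref{convex hull 1} and \ref{rational group 1}, and the only point requiring a moment's care is invoking $0 \notin G_+$ via Lemma \ref{set G_+ 1}, together with handling the trivial case $G = \{0\}$ so that the phrase ``exactly two'' is accurate (one may simply assume $G$ is nonzero, as is implicit in the paper's setup). One could also remark that the same argument shows the analogous statement for semigroups: a nontrivial subsemigroup of $\mathbb{Q}$ admits at most two orders compatible with addition, with existence depending on whether the semigroup (or its Grothendieck group) embeds compatibly.
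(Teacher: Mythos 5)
Your argument is exactly the paper's: invoke Lemma \ref{rational group 1} applied to the semigroup $G_+$ (which cannot contain the origin) to conclude $0\notin\Delta(G_+)$, and then apply Lemma \ref{convex hull 1}. The extra remarks about distinctness of the two orders and the trivial case $G=\{0\}$ are fine but not needed beyond what the paper does.
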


 \begin{proof} The semigroup $G_+\subset G$ related to the order $\succ$ cannot contain the origin. Thus, by Lemma \ref{rational group 1} $\Delta (G)$ does not contain the origin either. Thus,  the required statement follows from Lemma \ref{convex hull 1}.
 \end{proof}
 \medskip

In the section \ref{Rn} we  generalize Theorem to the multidimensional case. The multidimensional statement analogous to Lemma \ref{convex hull 1}  is based on a version of the Separation Theorem for convex sets (see Theorem \ref{thSepar}). The multidimensional statement analogous to Lemma \ref{rational group 1}  is based on Caratheodory's theorem (see Theorem \ref{cara-th}).

\begin{lemma}\label{lemma14} Let $S\subset \mathbb R$ be a finitely generated semigroup which contains  only nonnegative numbers. Then, $S$ with the natural  order  induced by  $\mathbb R$ is a well-ordered set. Moreover, for any $l\in \mathbb R$, there are only finitely many elements in $S$ which are smaller than $l$.
\end{lemma}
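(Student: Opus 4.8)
The plan is to use the fact that $S$ is finitely generated, say by nonnegative reals $g_1,\dots,g_k$, and to separate the generators into those equal to $0$ and those that are strictly positive. The zero generators contribute nothing, so without loss of generality assume $g_1,\dots,g_k>0$. Let $m=\min_i g_i>0$. Every element $s\in S$ is a nonnegative integer combination $s=\sum_i c_i g_i$ with $c_i\in\mathbb Z_{\geq 0}$, hence $s\geq m\cdot(\sum_i c_i)$. This immediately bounds the ``total degree'' $\sum_i c_i$ of any element below a given threshold.

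Concretely, I would argue as follows. Fix $l\in\mathbb R$. If $s\in S$ satisfies $s<l$, then from $s\geq m\sum_i c_i$ we get $\sum_i c_i\leq l/m$, so $\sum_i c_i$ takes one of finitely many values in $\{0,1,\dots,\lfloor l/m\rfloor\}$. For each fixed value $d$ of $\sum_i c_i$, there are only finitely many tuples $(c_1,\dots,c_k)\in\mathbb Z_{\geq 0}^k$ with $\sum_i c_i=d$ (at most $\binom{d+k-1}{k-1}$ of them). Hence there are only finitely many tuples $(c_1,\dots,c_k)$ that can possibly yield an element $s<l$, and a fortiori only finitely many elements of $S$ that are smaller than $l$. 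This proves the ``moreover'' clause.

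Finally, well-orderedness follows from this finiteness statement together with the fact that $S\subset\mathbb R_{\geq 0}$ is bounded below. Given any nonempty subset $T\subseteq S$, pick any $t_0\in T$; the set $\{s\in S: s\leq t_0\}$ is finite and nonempty, so $\{s\in T: s\leq t_0\}$ is finite and nonempty, and a finite nonempty set of reals has a least element, which is then the least element of $T$. Thus $(S,\succ)$ with $\succ$ the natural order is well-ordered.

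I do not expect any serious obstacle here; the only point requiring a little care is the passage from ``$s$ has a representation with small total degree'' to ``there are finitely many such $s$,'' which is handled simply by noting that distinct elements of $S$ may arise from distinct tuples but the set of admissible tuples is already finite, so the image is finite. One should also remember to dispose of the zero generators at the start so that $m>0$ is genuinely positive.
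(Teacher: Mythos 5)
Your proof is correct and follows essentially the same route as the paper: use the smallest strictly positive generator to show there are only finitely many elements of $S$ below any threshold $l$, and deduce well-orderedness from that finiteness. Your counting step is in fact more careful than the paper's --- the paper asserts there are at most $\frac{l}{C}+1$ elements of $S$ in $[0,l]$, which is literally false as soon as there are two rationally independent generators, whereas your bound via the finitely many tuples $(c_1,\dots,c_k)$ with $\sum_i c_i\leq l/m$ gives a correct (if larger) finite bound and yields the same conclusion.
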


\begin{proof} Indeed, assume that $C$ is the  smallest nonzero number among generators of $S$. Then, on  any segment $0\leq x\leq l$, there are at most $\frac {l}{C}+1$ elements of the semigroup $S$. So, any subset of $S$ contains a smallest element.
\end{proof}

\begin{theorem} A finitely generated semigroup $\subset \mathbb R$   which contains only rational points, i.e. $S\subset \mathbb Q$; and, which as a total order $\succ$  compatible with addition; is a well-ordered set if and only, for every nonzero element $a\in S$, the inequality $2a\succ a$ holds.
\end{theorem}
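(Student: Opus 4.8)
The plan is to prove the two implications separately. The forward direction is immediate from Lemma~\ref{well-ordered 1}: if $S$ equipped with $\succ$ is well-ordered, then $2a\succ a$ for every nonzero $a\in S$ by that lemma, so there is nothing more to do. The substance is in the converse, so I would concentrate on showing that the condition ``$2a\succ a$ for all nonzero $a\in S$'' forces $S$ to be well-ordered.

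For the converse, first I would dispose of the case where $S$ contains the origin (if not, adjoin it; this does not affect well-orderedness of the original $S$ and only adds one bottom element, and by Corollary~\ref{col2} the order extends). Let $a_1,\dots,a_k$ be a generating set. The key claim I would aim for is that the hypothesis $2a_i\succ a_i$, equivalently $a_i\succ 0$, for each generator implies $s\succeq 0$ for \emph{every} $s\in S$ (since every $s$ is a nonnegative integer combination of the $a_i$ and the order is compatible with addition), so in fact $0$ is the smallest element and the order on $S$ ``points away from the origin.'' Now I would invoke Theorem~\ref{geometry of order 1} (via the extension to the Grothendieck group $G$ from Corollary~\ref{col2}, embedded in $L=G\otimes_{\mathbb Z}\mathbb R$ and extended to a lexicographic order $\succ_{\mathbf x}$ on $L$): there is a coordinate system $\mathbf x=(x_1,\dots,x_n)$ on $L$ such that $\succ$ is the restriction of $\succ_{\mathbf x}$. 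The condition $a_i\succ 0$ then says each generator lies in the $\mathbf x$-half space $X_+$, i.e. in the convex cone $\overline{X_+}\cup\{0\}$; hence the whole cone $\sigma$ spanned by the $a_i$ lies in $\{0\}\cup X_+$.

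The heart of the matter — and the step I expect to be the main obstacle — is then a finiteness statement analogous to Lemma~\ref{lemma14} but in $n$ dimensions and for the lexicographic order rather than the Euclidean one: I must show that for each real $\ell$, the set $\{s\in S:\ell\succ_{\mathbf x} s\}$ (equivalently $\{s\in S: x_1(s)<c\}$ plus lower-order corrections) is finite, which gives well-orderedness since any nonempty subset then has a minimum (pick any element, finitely many lie below it, take the least among those together with it). To get this I would argue coordinate by coordinate on the flag $L_0\supset L_1\supset\cdots$: since the cone $\sigma$ meets $L_n=0$ only at the origin and lies in $\{0\}\cup X_+$, the linear functional $x_1$ is bounded below on $\sigma\setminus\{0\}$ away from $0$ — more precisely $x_1\ge 0$ on $\sigma$, and $x_1(s)=0$ forces $s$ into the smaller-dimensional face $\sigma\cap L_1$, on which one recurses. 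At each stage the lattice points of $S$ with bounded first coordinate form a finitely generated sub-object (here I would use that $S\subset\mathbb Q^n$ has a well-defined ``denominator'' so the relevant slices are genuinely discrete, the multidimensional analogue of Lemma~\ref{rational group 1}), and finitely many generators with a bound on a linear functional that is nonnegative on the cone yield only finitely many lattice points, by the same counting argument as in Lemma~\ref{lemma14}. Assembling these finiteness statements across the $n$ levels of the flag gives that every initial segment of $(S,\succ)$ is finite, hence $S$ is well-ordered. I would close by remarking that the rationality hypothesis is used exactly where Lemma~\ref{rational group 1} was needed in dimension one — without it the slices of the cone could contain infinite bounded discrete-looking sets and well-orderedness can fail.
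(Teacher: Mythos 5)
You have read the statement as an $n$-dimensional one, but it concerns a finitely generated semigroup $S\subset\mathbb Q\subset\mathbb R$: the ambient space is the real line. For that statement the paper's proof is short: by the preceding (unnumbered) Theorem on orders of rational subgroups, any order on $S$ compatible with addition is either the natural order of $\mathbb R$ or its reverse; the hypothesis $2a\succ a$, i.e.\ $a\succ 0$ for nonzero $a$, then forces $S$ into the closed ray of nonnegative (resp.\ nonpositive) numbers; and Lemma~\ref{lemma14} finishes. Your plan, specialized to dimension one, collapses to exactly this, so for the theorem actually being proved you are essentially right. Two citations are off, though: the tool that makes the order lexicographic for a subgroup of $\mathbb Q^n$ is Theorem~\ref{th1} (Caratheodory plus rationality) combined with Theorem~\ref{convex hull and order} (the separation argument), not Theorem~\ref{geometry of order 1}, which requires an order defined on all of $L$ and compatible with scalar multiplication. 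Correspondingly, rationality is used to show that $\Delta(G_+)$ misses the origin --- i.e.\ that the order is lexicographic at all --- not primarily for discreteness of slices.

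More importantly, the step you yourself single out as the heart of your $n$-dimensional argument is false for $n\ge 2$: under a lexicographic order the initial segments of $S$ need not be finite. Take $S=\mathbb Z^2_{\ge 0}$ with the lexicographic order $\succ_{\mathbf x}$; every point $(0,k)$ with $k\ge 0$ is smaller than $(1,0)$, so the set $\{s\in S:(1,0)\succ_{\mathbf x}s\}$ is infinite even though $S$ is well-ordered. Hence ``every initial segment is finite'' cannot be the mechanism in higher dimensions, and this is precisely why the paper's multidimensional result (Theorem~\ref{th on well order 1}) is organized differently: one inducts on the flag, shows that $x_1(S)\subset\mathbb R$ is a finitely generated semigroup of nonnegative reals and hence well-ordered with finite initial segments (this is where Lemma~\ref{lemma14} lives), and then shows that each slice $\{x_1=d\}\cap S$ is covered by finitely many translates $S_{x_1}+f$ of the lower-dimensional well-ordered semigroup $S_{x_1}$; a subset $D$ then acquires a least element by first minimizing $x_1$ on $D$ and then minimizing within that slice. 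If you intend your write-up only for the one-dimensional statement, delete the flag machinery and quote Lemma~\ref{lemma14}; if you intend it for $\mathbb Q^n$, replace the finiteness claim by this slicing induction.
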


\begin{proof} In one direction, the Theorem follows from Lemma \ref{well-ordered 1}.
\medskip

 Let us prove it in the opposite direction.

 Since $S$ contains rational  only points and the order $\succ$ is compatible with addition, the order $\succ$  is induced either by the natural order on $\mathbb R$, or by the opposite  order on $\mathbb R$. By the condition in the Theorem, either $S$ belongs to the ray of nonnegative numbers (if $\succ$ is induced from the natural order); or $S$ belongs to the ray of nonpositive  numbers (if $\succ$ is induced from the opposite order). In both cases,  Lemma \ref{lemma14} implies that $S$ is a well-ordered set.
\end{proof}

\section{Two theorems in the geometry of convex sets}

In this section, we discuss two theorems from convex geometry which we will use later.

\subsection{Version of the Separation Theorem for non-necessarily  closed convex sets}

Let us recall the classical Separation Theorem for closed convex sets in a real finite dimensional space $L$ (for example, see \cite{2}).

\begin{theorem}[Separation Theorem] For any closed convex set $\Delta\subset L$ and for any point $a\in L$ not belonging to $\Delta$, there is a linear function $x:L\to \mathbb R$ such that for any point $b \in \Delta$ the inequality $x(a)<x(b)$ holds.
\end{theorem}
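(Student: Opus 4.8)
The plan is to separate $a$ from $\Delta$ by the linear functional dual to the \emph{nearest point} of $\Delta$ to $a$. First I would fix a Euclidean inner product $\langle\cdot,\cdot\rangle$ on $L$ (any choice of coordinates supplies one) and assume $\Delta\neq\emptyset$, since otherwise the claim is vacuous. The first substantive step is to produce a point $p\in\Delta$ realizing the distance from $a$ to $\Delta$. To guarantee that such a minimizer exists, I would choose any $b_0\in\Delta$ and replace $\Delta$ by its intersection with the closed ball $\{v\in L:\|v-a\|\le\|b_0-a\|\}$. This intersection is closed (intersection of two closed sets) and bounded, hence compact in the finite-dimensional space $L$, so the continuous function $v\mapsto\|v-a\|^2$ attains a minimum on it at some point $p$; by the choice of radius, $p$ minimizes the distance to $a$ over all of $\Delta$.

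The geometric heart of the argument is the variational inequality $\langle p-a,\,b-p\rangle\ge 0$ for every $b\in\Delta$. To obtain it I would fix $b\in\Delta$ and exploit convexity: the entire segment $p+t(b-p)$ with $t\in[0,1]$ lies in $\Delta$, so the one-variable function $g(t)=\|p+t(b-p)-a\|^2$ attains its minimum over $[0,1]$ at the left endpoint $t=0$. A direct computation gives $g'(0)=2\langle p-a,\,b-p\rangle$, and since $g$ is minimized at $t=0$ this one-sided derivative cannot be negative, which is precisely the asserted inequality.

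Finally I would set $x(v)=\langle p-a,\,v\rangle$, a linear function on $L$. For any $b\in\Delta$, decomposing $b-a=(b-p)+(p-a)$ yields $x(b)-x(a)=\langle p-a,\,b-a\rangle=\langle p-a,\,b-p\rangle+\|p-a\|^2\ge\|p-a\|^2$, where the inequality uses the variational inequality from the previous paragraph. Since $a\notin\Delta$ forces $p\neq a$, we have $\|p-a\|^2>0$, and therefore $x(a)<x(b)$ for every $b\in\Delta$, as required. The only step that goes beyond routine calculus is the existence of the minimizer $p$, and this is exactly where the finite-dimensionality of $L$ is indispensable, entering through the compactness of closed bounded sets; the rest of the proof is elementary once $p$ is in hand.
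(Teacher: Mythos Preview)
Your argument is correct and follows essentially the same route as the paper: fix a Euclidean structure, let $p$ be the nearest point of $\Delta$ to $a$, derive the variational inequality $\langle p-a,\,b-p\rangle\ge 0$ from the minimality of $p$ along segments in $\Delta$, and take $x(v)=\langle p-a,\,v\rangle$. Your version is actually a bit more careful than the paper's in justifying the existence of the minimizer via a compactness argument, but the overall strategy is identical.
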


\begin{proof} Choose any Euclidean metric on $L$. Denote by $\rho(v_1,v_2)$ the distance between points $v_1,v_2\in L$. Denote by $f:L\to \mathbb R$ the function whose value at a point $y\in L$ is equal to $\rho(a,y)$. The function $F$ is smooth on  $L\setminus \{ a\}$ and it tends to infinity as $y$ tends to infinity.
\medskip

 Since $\Delta$ is closed, the function $f$ attains its minimum on $\Delta$ at some point $b\in \Delta$.
\medskip

Let $x$ be the linear function on $L$ defined by relation $x(y)=\langle y, b-a\rangle$, where $\langle v_1,v_2\rangle$ is the inner product of the vectors $v_1,v_2\in L$.
\medskip

The gradient $\nabla F_b$ of the function $F$ at the point $b$ is equal to $\dfrac{ b-a}{|b-a|}$. For any point $y\in \Delta$, the segment joining $b$ and $y$ belongs to $\Delta$. Since $f$ attains its minimum on $\Delta$ at the point $b$, the inner product $\langle \nabla F_b, c-b\rangle$ is a nonnegative number.
\medskip

The inequality  $\langle \nabla F_b, c-b\rangle \geq 0$   means that the set $\Delta$ belongs to the closed half space where the function $x$ is bigger than or equal to $x(b)$; while the point $a$ is located in the  open half space where $x$ is smaller than $x(b)$.
\end{proof}

With any system of coordinates $\mathbf x=(x_1,\dots,x_n)$ on $L$ one associates the lexicographic order $\succ_\mathbf x$ on $L$.

\begin{definition} For any point $a\in L$, denote by $L_+(a,\mathbf x)$ the set of points $y\in L$ satisfying the inequality $y\succ_\mathbf x a $. We will call the set $L_a(\mathbf x)$ the {\sl $x$-half space with the vertex $a$}.
\end{definition}

\medskip

The set $L_+(a,\mathbf x)$ is equal to the $x$-half space  $L_{+}(\mathbf x)$  shifted by vector $a$.

\begin{theorem}[Version of the Separation Theorem, see see \cite{1}]\label{thSepar} Let $\Delta\subset L$ be a (not-necessarily  closed) convex set,  and let $a\in L\setminus \Delta$ be a point not belonging to $\Delta$. Then, there is a coordinate system $\mathbf x=(x_1,\dots,x_n)$ in $L$,  such that $\Delta$ belongs to the $x$-half space with vertex $a$, i.e.  $\Delta\subset L_+(a,\mathbf x)$.
\end{theorem}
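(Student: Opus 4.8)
The plan is to induct on $n = \dim L$, using the classical Separation Theorem as the engine and peeling off one coordinate at a time. The base case $n = 0$ is vacuous (there is no point $a \ne 0$ to separate, or rather $L \setminus \Delta$ forces $\Delta = \emptyset$ and any "order" works trivially). For the inductive step, after translating we may assume $a = 0$, so the hypothesis is $0 \notin \Delta$ and we must produce a coordinate system $\mathbf x$ with $\Delta \subset L_+(\mathbf x)$ (the $\mathbf x$-half space with vertex at the origin).

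First I would pass to the closure: consider $\overline{\Delta}$, the topological closure of $\Delta$. There are two cases. \emph{Case 1: $0 \notin \overline{\Delta}$.} Then the classical Separation Theorem gives a linear function $x_1 : L \to \mathbb R$ with $x_1(b) > 0 = x_1(a)$ for all $b \in \overline{\Delta} \supset \Delta$. Extend $x_1$ to any coordinate system $\mathbf x = (x_1, \dots, x_n)$; since $x_1 > 0$ on $\Delta$, every point of $\Delta$ lies in $L_0^+ \subset X_+ = L_+(\mathbf x)$, and we are done — the remaining coordinates $x_2, \dots, x_n$ are irrelevant. \emph{Case 2: $0 \in \overline{\Delta}$.} This is the case that forces the induction. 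Since $\overline{\Delta}$ is closed and convex and $0 \in \overline{\Delta}$ while $0 \notin \Delta$, the point $0$ lies on the boundary of $\overline{\Delta}$, so there is a supporting hyperplane: a nonzero linear $x_1$ with $x_1 \ge 0$ on $\overline{\Delta}$ and hence on $\Delta$. Let $H = \{x_1 = 0\}$ be the corresponding hyperplane through the origin. The set $\Delta$ splits into the part with $x_1 > 0$ (which already lies in $L_0^+$) and the part $\Delta \cap H$ with $x_1 = 0$. The key point is that $\Delta \cap H$ is again a convex subset of the $(n-1)$-dimensional space $H$, and it still does not contain the origin (because $0 \notin \Delta$). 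So by the inductive hypothesis applied inside $H$, there is a coordinate system $(x_2, \dots, x_n)$ on $H$ with $\Delta \cap H \subset L_+(\mathbf x')$ where $\mathbf x' = (x_2, \dots, x_n)$ is the coordinate system on $H$. Combining $x_1$ with $(x_2,\dots,x_n)$ (extended arbitrarily off $H$ to a full coordinate system on $L$) gives the desired $\mathbf x$: a point of $\Delta$ with $x_1 > 0$ lies in $L_0^+$, and a point with $x_1 = 0$ lies in $\Delta \cap H$, hence in $\cup_{1 \le i < n} L_i^+$; in either case it lies in $X_+ = L_+(\mathbf x)$.

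One subtlety I want to flag and handle carefully: in Case 2, I claimed $0 \in \overline{\Delta}$ with $0 \notin \Delta$ implies $0$ is a boundary point of $\overline{\Delta}$. This is immediate since if $0$ were interior to $\overline{\Delta}$, then... actually one must be slightly careful, since a convex set and its closure have the same \emph{relative} interior, and $0$ could lie in the relative interior of $\overline{\Delta}$ without $0 \in \Delta$ only if — no: the relative interior of a convex set is contained in the set itself, and $\mathrm{relint}(\Delta) = \mathrm{relint}(\overline{\Delta})$, so if $0 \in \mathrm{relint}(\overline{\Delta})$ then $0 \in \mathrm{relint}(\Delta) \subset \Delta$, contradiction. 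Hence $0$ lies in the relative boundary of $\overline{\Delta}$ within the affine hull of $\Delta$, and a supporting hyperplane exists. If the affine hull of $\Delta$ is not all of $L$, one first chooses $x_1$ to vanish on that affine hull entirely (so $\Delta \subset H$ outright and the $x_1 > 0$ part is empty), which still works. I expect this relative-interior bookkeeping — correctly choosing $H$ when $\Delta$ is lower-dimensional or when $0$ sits at a "corner" — to be the main obstacle; the rest is a clean induction driven by the classical theorem.

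Uniqueness is not asserted in this theorem (unlike Theorem~\ref{geometry of order 2}), so I would not address it here.
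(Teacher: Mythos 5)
Your proof is correct and follows essentially the same route as the paper's: reduce to the classical Separation Theorem when $a\notin\overline{\Delta}$, and otherwise take a supporting hyperplane $H$ at $a$, apply the inductive hypothesis to the convex set $\Delta\cap H$ inside $H$, and extend the resulting coordinates to $L$ with the supporting functional as $x_1$. Your explicit handling of the relative-interior/affine-hull bookkeeping is in fact more careful than the paper's, which asserts the existence of the supporting hyperplane without comment.
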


 \begin{proof} Let $\overline \Delta$ be the closure of $\Delta$. If  $a$ does not belong to $\overline \Delta$, then by the Separation Theorem for closed  convex sets, there is a linear function $x:L\to \mathbb R$ such that, for any $b\in \overline \Delta$, the inequality $x(a)<x(b)$ holds. Let us choose an arbitrary system of coordinates  $\mathbf x=(x_1,x_2,\dots,x_n)$, with $x_1=x$.Then,
 $$L_+(a,\mathbf x)\supset \Delta.$$
  So, for the case under consideration the Theorem is proven.
  \medskip

Now, assume  that $a\in \overline \Delta$. Since $a$ is not in $\Delta$, there is a support hyperplane $LH$ for $\overline \Delta$ at the point $a$.  In relation to $H$, one can consider a linear function $x:L\to \mathbb R$, such that $x$ restricted to $H$ is a constant and $x$ attains it's minimum on $\overline \Delta$ at the point $a$.
\medskip

Consider a convex set $\Delta_1=H\cap \Delta$ in the affine space $H$. If $0\in H$, then $H$  is a linear space of dimension $(n-1)$.
\medskip

If $H$ does not contain the origin, choose any point $O_1\in H$, and consider $H$ a linear space with  origin $O_1$.
\medskip

Note that any linear function $\tilde x$ which is defined on $H$ can be extended to a linear function $x$ on $L$ (if $0\in H$ there is a one parameter family of such extensions; if the origin is not in $H$ such n extension is unique).

\medskip

By induction, we can assume that the Theorem is proven for all  $(n-1)$-dimensional spaces. So, there is a coordinate system $\tilde {\mathbf x}=(\tilde x_2,\dots,\tilde x_n)$ on $H$ such that the set $H_+(a,\tilde {\mathbf x})\subset H$ contains the set $\Delta_1$.
\smallskip

To complete the proof, one can extend the functions $\tilde x_2,\dots,\tilde x_n$ (which are defined on $H$) to linear functions $x_2,\dots,x_n$ on $L$. Consider the coordinate system $\mathbf x=x_1,x_2,\dots,x_n$, with $x_1=x$, where the function $x$ is defined in the first step of our inductive proof.
\medskip

It is easy to check that for the coordinate system $\mathbf x=x_1,x_2,\dots,x_n$ the $x$-half space  $L_+(a,\mathbf x)$ with vertex $a$  contains the set $\Delta$.
\end{proof}

\medskip

Now, we are ready to prove Theorem \ref{geometry of order 2}.
\medskip

 \begin{proof}[Proof of Theorem \ref{geometry of order 2}] By assumption the origin does not belong to the convex set  set $L_+$. So, by our version of Separation Theorem,  there is a coordinate system $\mathbf x$ on $L$ such that the $x$-half space $L_{+} (\mathbf x)$ contains the set $L_+$. These assumptions imply that the sets $L_{+} (\mathbf x)$ and  $L_+$ are equal. Indeed, if there is a point $x\in L_{+} (\mathbf x)$ which is not in $L_+$, then $x$ must belong to the set $-L_{-}$. This means that $-x\in L_+$. We obtain a contradiction since the point $-x$ is not in set
$L_{+} (\mathbf x)$. This contradiction proves the Theorem.
\end{proof}

\medskip

The version of Separation Theorem implies the following corollary:

\begin{corollary} A subset $\Delta$ in a real $n$-dimensional space $L$ is convex, if and only  if it is equal to the  intersection (over all choices of coordinate systems $\mathbf x$  and  points $a\in L$) of all  sets $L_+(a,\mathbf x)$ containing $\Delta$.
\end{corollary}

\begin{proof}
 All sets  $L_+(a,\mathbf x)$ are convex. For any point $a$ not in $\Delta$ there is a set $L_+(a,\mathbf x)$ which contains $\delta$.
\end{proof}

\subsection{Convex geometry related to Caratheodory's theorem}

Let $L$ be any real vector space (perhaps  of infinite dimension).  For a set  $A\subset L$ let us denote by $\Delta(A)$  the convex hull of $A$ (which is not necessarily is closed).

\begin{lemma}\label{convex hull}  A point $x\in L$ belongs to the convex hull  $\Delta(A)$ of a set $A\subset L$ if and only if $x$ belongs to the convex hull of some finite  subset $B$ of the set~$A$.
\end{lemma}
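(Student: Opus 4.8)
The plan is to prove Lemma~\ref{convex hull} (the qualitative core of Carath\'eodory's theorem) directly from the definition of the convex hull, without yet worrying about the dimension-dependent bound on the size of $B$. First I would recall that $\Delta(A)$, being the smallest convex set containing $A$, can be described explicitly as the set of all finite convex combinations of points of $A$: that is, $x\in\Delta(A)$ if and only if there exist finitely many points $a_1,\dots,a_m\in A$ and nonnegative reals $\lambda_1,\dots,\lambda_m$ with $\sum_{i=1}^m\lambda_i=1$ and $x=\sum_{i=1}^m\lambda_i a_i$. One checks this in the routine way: the set of all such finite convex combinations is itself convex and contains $A$, hence contains $\Delta(A)$; conversely any convex set containing $A$ is closed under taking convex combinations of its points (by induction on $m$), hence contains every such combination, so the set of finite convex combinations is contained in every convex set containing $A$, in particular in $\Delta(A)$.

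Once that description is in hand, the lemma is immediate in both directions. If $x$ lies in the convex hull of a finite subset $B\subset A$, then $\Delta(B)\subset\Delta(A)$ by monotonicity of the convex-hull operation, so $x\in\Delta(A)$. Conversely, if $x\in\Delta(A)$, write $x=\sum_{i=1}^m\lambda_i a_i$ as above and set $B=\{a_1,\dots,a_m\}$, a finite subset of $A$; the same expression exhibits $x$ as a convex combination of points of $B$, so $x\in\Delta(B)$. That closes the argument.

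The main obstacle here is essentially bookkeeping rather than mathematics: the real content is establishing the ``finite convex combination'' characterization of $\Delta(A)$, and the only subtle point in that is the inductive step showing a convex set is closed under arbitrary finite convex combinations of its elements — one peels off one point, say writing $\sum_{i=1}^m\lambda_i a_i = \lambda_m a_m + (1-\lambda_m)\sum_{i=1}^{m-1}\frac{\lambda_i}{1-\lambda_m}a_i$ when $\lambda_m\neq 1$, and applies the induction hypothesis to the inner sum (the case $\lambda_m=1$ being trivial). I would present this cleanly and note that the quantitative Carath\'eodory bound — that $B$ may be taken with $|B|\le n+1$ when $L$ has dimension $n$ — is a separate refinement obtained by a linear-dependence argument on the $a_i$, which is presumably what the subsequent Theorem~\ref{cara-th} will address; for the purposes of this lemma, finiteness of $B$ is all that is claimed and all that is needed.
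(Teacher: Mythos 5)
Your proof is correct. It is essentially the same idea as the paper's, but you route it through an extra layer: you first establish the explicit characterization of $\Delta(A)$ as the set of all finite convex combinations of points of $A$ (which needs the induction on the number of terms that you describe), and then read the lemma off from that. The paper gets away with less: it observes directly that the union $\bigcup_B \Delta(B)$ over all finite subsets $B\subset A$ is convex, because any two points of that union lie in $\Delta(B_1)$ and $\Delta(B_2)$ and hence the segment joining them lies in $\Delta(B_1\cup B_2)$, which is again the hull of a finite subset. Since that union contains $A$ and $\Delta(A)$ is the smallest convex set containing $A$, the union contains $\Delta(A)$, and the converse inclusion is monotonicity, exactly as in your first direction. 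So the paper's argument needs only the two-point (segment) instance of convexity and no induction, while yours buys, as a by-product, the convex-combination description of the hull, which is genuinely useful elsewhere (e.g.\ in the proof of Carath\'eodory's theorem that follows). Both are complete; you are also right that the quantitative bound $|B|\le n+1$ is deferred to Theorem~\ref{cara-th} and is not needed here.
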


\begin{proof} On the one hand the convex set $\Delta (A)$ must contain the convex hull of each finite set $B\subset A$. On the other hand if $x_1,x_2$ belong to the convex hulls of finite sets $B_1,B_2$, then the segment joining $x_1$ and $x_2$ belongs to the convex hull of the finite  set $B_1\cup B_2$.
\end{proof}

\begin{definition} A set $B \subset L$ containing $k+1$-points  is {\sl affinely independent} if it does not belong to any  affine subspace $L_B\subset L$, with $\dim L_B<k$.
\end{definition}
 \smallskip

Caratheodory's Theorem (for example, see \cite{2}) improves  Lemma \ref{convex hull}.
 \medskip

 \begin{theorem}[Caratheodory Theorem]\label{cara-th} A point $x\in L$ belongs to the convex hull  $\Delta(A)$ of a set $A\subset L$ if and only if $x$ belongs to the convex hull of some finite subset $B\subset A$ which is affinely  independent. Any point $x$ in the smallest affine space containing the set $B$ has a unique representation of the form $x=\sum \lambda_i b_i$, where $b_i$ are points of the set $B$ and $\lambda$ are real numbers such that $\sum \lambda_i=1$.
 \end{theorem}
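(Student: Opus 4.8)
The plan is to prove Caratheodory's Theorem by reducing an arbitrary finite representation of $x$ as a convex combination to an affinely independent one, via a standard dimension-reduction argument. First I would invoke Lemma \ref{convex hull} to replace $A$ by a finite subset: it suffices to show that if $x = \sum_{i=0}^{m} \lambda_i b_i$ with $b_i \in A$, $\lambda_i \geq 0$, $\sum \lambda_i = 1$, then $x$ lies in the convex hull of an affinely independent subset of $\{b_0, \dots, b_m\}$. I would discard any $b_i$ with $\lambda_i = 0$, so without loss of generality all $\lambda_i > 0$.

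The heart of the argument is the following step: if $\{b_0, \dots, b_m\}$ is affinely dependent, I can produce a new convex representation of $x$ using strictly fewer points, after which I repeat. Affine dependence means there are real numbers $\mu_0, \dots, \mu_m$, not all zero, with $\sum_{i=0}^m \mu_i = 0$ and $\sum_{i=0}^m \mu_i b_i = 0$. Then for any real $t$ I have $x = \sum_{i=0}^m (\lambda_i - t\mu_i) b_i$ and the coefficients still sum to $1$. Since $\sum \mu_i = 0$ and the $\mu_i$ are not all zero, at least one $\mu_i$ is positive; I would choose $t = \min\{\lambda_i/\mu_i : \mu_i > 0\} > 0$. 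For this $t$ every coefficient $\lambda_i - t\mu_i$ is nonnegative (automatically so when $\mu_i \leq 0$, and by the choice of $t$ when $\mu_i > 0$), and at least one of them vanishes. Dropping that point gives a convex combination of at most $m$ of the original points equal to $x$. Iterating, the process terminates at an affinely independent subset $B$, which proves the first assertion.

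For the uniqueness statement, I would argue directly: if $B = \{b_0, \dots, b_k\}$ is affinely independent (so it has $k+1$ points and spans a $k$-dimensional affine subspace $L_B$), and a point $x \in L_B$ has two representations $x = \sum \lambda_i b_i = \sum \lambda_i' b_i$ with $\sum \lambda_i = \sum \lambda_i' = 1$, then subtracting gives $\sum (\lambda_i - \lambda_i') b_i = 0$ with $\sum (\lambda_i - \lambda_i') = 0$. By the definition of affine independence this forces all $\lambda_i - \lambda_i' = 0$, since a nontrivial such relation would exhibit $B$ as affinely dependent, i.e. lying in an affine subspace of dimension $< k$. (One should note existence of \emph{some} representation for $x \in L_B$ as well, which follows because $L_B$, being the affine hull of $B$, is exactly the set of affine combinations $\sum \lambda_i b_i$ with $\sum \lambda_i = 1$.)

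I do not anticipate a serious obstacle here; the argument is entirely elementary. The one point requiring slight care is the bookkeeping in the reduction step — verifying that $t$ is well-defined (some $\mu_i > 0$) and strictly positive, and that all new coefficients are genuinely nonnegative — but this is routine. The only conceptual subtlety is making sure the definition of affine independence given in the paper (a $(k+1)$-point set not contained in any affine subspace of dimension $< k$) is used consistently, namely that it is equivalent to the nonexistence of a nontrivial relation $\sum \mu_i b_i = 0$, $\sum \mu_i = 0$; this equivalence I would either state as an immediate observation or fold into the two places it is used.
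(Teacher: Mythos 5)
Your proof is correct, but it takes a genuinely different route from the paper's. You use the classical coefficient-perturbation argument: starting from any finite convex representation $x=\sum\lambda_i b_i$ with all $\lambda_i>0$, an affine dependence $\sum\mu_i b_i=0$, $\sum\mu_i=0$ lets you slide the coefficients by $t\mu_i$ until one vanishes, and the strict decrease in the number of points guarantees termination at an affinely independent subset. The paper instead chooses a subset $B$ of \emph{minimal} cardinality $k+1$ with $x\in\Delta(B)$ and invokes the geometric Lemma \ref{map} (the image of a polyhedron under an affine map into a $k$-dimensional target is covered by the images of its faces of dimension at most $k$), applied to the affine map from the standard simplex $\Delta_k$ onto $\Delta(B)$: by minimality $x$ cannot lie in the image of a proper face, which forces $\dim L_B=k$, hence the map is an affine bijection, giving both affine independence and the uniqueness of the barycentric coordinates in one stroke. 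The two arguments are close in spirit --- both exploit the fact that an affinely dependent representation can be shrunk --- but yours is self-contained, purely algebraic, and avoids any appeal to the face structure of polyhedra, whereas the paper's route packages the reduction into a separate lemma of independent geometric interest. Your one flagged subtlety is real but harmless: the paper's definition of affine independence ($k+1$ points not lying in an affine subspace of dimension $<k$) is equivalent to the nonexistence of a nontrivial relation $\sum\mu_i b_i=0$ with $\sum\mu_i=0$, and you do need to record that equivalence explicitly since you use it in both the reduction step and the uniqueness step; it follows by translating $b_0$ to the origin and reducing to linear independence of the $b_i-b_0$.
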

 \medskip

Caratheodory's Theorem  follows from the geometric Lemma \ref{map} (see below).

\medskip

Let $L_1$ and $L_2$ be affine spaces of dimensions $n$ and $k$, respectivly. Let $\Delta\subset L_1$ be a convex $n$-dimensional polyhedron.
\medskip

\begin{lemma}\label{map}  Let $A:L_1\to L_2$ be an affine map. Then the image $A(\Delta)\subset L_2$ is equal to the union $\cup A(\Gamma_i)$  of the images $A(\Gamma_i)$ of all faces $\Gamma_i$ of $\Delta$  such that $\dim \Gamma_i\leq k$.
\end{lemma}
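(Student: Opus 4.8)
The plan is to prove the two inclusions of $A(\Delta)=\bigcup_i A(\Gamma_i)$ separately, the non‑trivial one by induction on $n=\dim\Delta$ (recall $\Delta$ is full‑dimensional in $L_1$, so $n=\dim L_1$ and the ordinary and relative interiors of $\Delta$ coincide). The inclusion $\bigcup_i A(\Gamma_i)\subseteq A(\Delta)$ is immediate since every face satisfies $\Gamma_i\subseteq\Delta$. If $n\le k$ then $\Delta$ itself is a face of dimension $\le k$ and the claimed equality is trivial; so from now on I assume $n>k$ and that the lemma is known for all polytopes of dimension $<n$.

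The first real step is a reduction \emph{to the boundary}: I claim $A(\Delta)=A(\partial\Delta)$. Let $y\in A(\Delta)$ and choose $x\in\Delta$ with $A(x)=y$. If $x\in\partial\Delta$ there is nothing to do. Otherwise $x$ is an interior point. The fibre $A^{-1}(y)$ is an affine subspace of $L_1$ of dimension $n-r$, where $r\le k$ is the rank of the linear part of $A$; since $n>k\ge r$ this dimension is $\ge 1$, so $A^{-1}(y)$ contains a line $\ell$ through $x$. As $\Delta$ is compact and $x$ is interior, $\ell\cap\Delta$ is a segment $[p,q]$ with $x$ strictly between $p$ and $q$ and $p,q\in\partial\Delta$; and $A$ is constant equal to $y$ along $\ell$, so $A(p)=y$. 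Thus every value of $A$ on $\Delta$ is already attained on $\partial\Delta$.

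Now $\partial\Delta$ is the union of the facets $\Delta'$ of $\Delta$, and each $\Delta'$ is a polytope of dimension $n-1\ge k$, full‑dimensional in its affine hull $\mathrm{aff}(\Delta')$. Applying the induction hypothesis to the affine map $A|_{\mathrm{aff}(\Delta')}\colon\mathrm{aff}(\Delta')\to L_2$ and the polytope $\Delta'$ gives $A(\Delta')=\bigcup_{\Gamma}A(\Gamma)$, the union over faces $\Gamma$ of $\Delta'$ of dimension $\le k$. Since a face of a facet of $\Delta$ is again a face of $\Delta$, taking the union over all facets shows $A(\Delta)=A(\partial\Delta)$ is a union of sets $A(\Gamma)$ with $\Gamma$ a face of $\Delta$ and $\dim\Gamma\le k$; together with the easy inclusion (which uses \emph{all} such faces) this yields $A(\Delta)=\bigcup_i A(\Gamma_i)$.

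The step needing the most care is the slide‑to‑the‑boundary argument, specifically its use of compactness: for unbounded polyhedra the lemma is false (a closed half‑plane projecting onto a half‑line has no proper face mapping onto that half‑line), so it must be read with $\Delta$ a polytope — which is exactly the case needed, since to deduce Caratheodory's Theorem \ref{cara-th} one applies it with $\Delta$ a standard simplex on a finite subset $B$ of $A$ (reducing to finite $B$ via Lemma \ref{convex hull}) and $L_2$ the affine hull of $B$, whose faces correspond to subsets of $B$. A one‑shot alternative to the induction: take a vertex $v$ of the polytope $F=\Delta\cap A^{-1}(y)$ and let $\Gamma$ be the unique face of $\Delta$ containing $v$ in its relative interior; extending a segment from any $w\in\Gamma\cap A^{-1}(y)$ through $v$ stays in $\Gamma$ and produces another fibre point, exhibiting $v$ as non‑extreme in $F$ unless $w=v$, so $\Gamma\cap A^{-1}(y)=\{v\}$; since a neighbourhood of $v$ in the affine subspace $\mathrm{aff}(\Gamma)\cap A^{-1}(y)$ (of dimension $\ge\dim\Gamma-k$) lies in $\Gamma$, this forces $\dim\Gamma\le k$, with $y=A(v)\in A(\Gamma)$. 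The remaining bookkeeping — that the boundary of a polytope is the union of its facets and that faces of faces are faces — is standard.
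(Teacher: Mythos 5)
Your proof is correct, but your main route differs from the paper's. The paper argues in one step: for $a\in A(\Delta)$ it takes the fibre $L_3=A^{-1}(a)$ and the \emph{lowest-dimensional} face $\Gamma$ of $\Delta$ meeting $L_3$, observes that minimality forces $L_3\cap\Gamma$ to lie in the relative interior of $\Gamma$ and hence to be a single point, and deduces $\dim\Gamma\le k$ from a dimension count on $L_3\cap\mathrm{aff}(\Gamma)$ — this is essentially your ``one-shot alternative,'' with a vertex of the fibre polytope $F=\Delta\cap A^{-1}(y)$ playing the role of that single intersection point. Your primary argument instead slides an interior fibre point along a line in the fibre to the boundary (using compactness) and then inducts on dimension over the facets; it is longer but every step is elementary and fully justified, whereas the paper's ``this condition implies that $L_3\cap\Gamma$ is an interior point of $\Gamma$'' compresses exactly the convexity/extremality argument you spell out. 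Two of your side remarks add genuine value: the lemma as stated for a ``polyhedron'' is false without boundedness (your half-plane example), and boundedness is what both your line-segment step and the extreme-point step rely on — harmless here since the application to Caratheodory's theorem only uses the standard simplex; and your bound $\dim A^{-1}(y)=n-r\ge n-k$ corrects the paper's assertion that the fibre is exactly $(n-k)$-dimensional, which holds only when the affine map has full rank.
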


\begin{proof} Let $a\in A(\Delta)$ be any point in the image of $\Delta$. Its preimage $ L_3= A^{-1}(a)\subset L_1$ is an $(n-k)$-dimensional affine subspace of $L_1$ which intersects the polyhedron $\Delta$. Let $\Gamma$ the lowest dimensional face of $\Delta$ which has  non empty  intersection with $L_3$. Thus, $L_3$ cannot intersect the boundary of $\Gamma$. This condition implies  that $L_3\cap \Gamma$ is an interior point of $\Gamma$. Thus $\dim \Gamma\leq k$.
\end{proof}

\medskip

We will need one more observation from linear algebra.
\smallskip

 Let $\mathbb R^{k+1}$ be the standard linear space  with the standard  basis $e_1,\dots,e_{k+1}$  and standard  the coordinates $\lambda_1,\dots, \lambda_{k+1}$ .
 \smallskip

Let $\mathcal L_k$ be the $k$-dimensional hyperplane  in $R^{k+1}$ defined by the equation
$$\lambda_1+\dots +\lambda_{k+1}=1.$$

\medskip

Each point  $x$ in $\mathcal L_k$  has a unique representation of the form $\sum \lambda_1 e_i$ where $\sum \lambda_1=1$.
\smallskip

Let $\Delta_k$ be the standard simplex in $\mathcal L_k\subset \mathbb R^{k+1}$,  defined by the inequalities $\lambda_1\geq 0,\dots,\lambda_{k+1}\geq 0$. The vertices  of the polyhedron $\Delta_k$  are the endpoints $P_1,\dots, P_{k+1}$ of the vectors $e_1,\dots,e_{+k+1}$.

\medskip

\begin{proof}[Proof of Caratheodory's Theorem] For a point $x\in \Delta(A)$, choose a set $B\subset A$ having the smallest number  $k+1$ of elements $b_1,\dots,b_{k+1}$, such that $x$ belongs to the convex hull $\Delta(b)$ of $B$. Let $L_B$ be the the smallest affine space containing $\Delta(B)$; so $\dim \Delta(B)=\dim L_B\leq k$.
\medskip

Let us show that $\dim L_B=k$. Consider an affine  map $A:\mathcal L_k\to L_B$ which maps the vertices $p_1,\dots,p_{k+1}$ to the points $b_1,\dots, b_{k+1}$ (thus $A(p_i)=b_i)$.
\medskip

 The image $A(\Delta_k)$ is the union of images $A(\Gamma_i)$ of the faces $\Gamma_i$ of $k$-dimensional simplex  $\Delta_k$, such that $\dim \Gamma_i=\dim L_B$. Since $x\in A(\Delta_k)$ and $x$ does not belong to an image of a proper face $\Gamma_i\subset \Delta_k$ one concludes that $\dim L_b=k$ and the map $A:\mathcal L_k\to L_B$ is one-to-one affine map. So the set $B$ is affinely independent, and each point  $y\in L_B$ has a unique representation of the form $y=\sum \lambda_i a_i$, $\sum \lambda_i=1$.
\end{proof}

\section{Total orders compatible with addition on an additive subgroup on $\mathbb R^n$ and $\mathbb Q^n$}\label{Rn}

In this section we will generalize to the multidimensional case the one-dimensional results which we presented earlier. We will use as our main tools the  two theorems from convex geometry presented in the previous section.

\subsection{Lexicographic orders and orders compatible  with addition on  subgroups of $\mathbb R^n$}\label{subsec6.1}

Any  total order compatible with addition on a commutative group $G$  is determined by a semigroup $G_+\subset G$ (see  Lemma \ref{set G_+ 2}).
\smallskip

In this subsection we will consider subgroups $G$ of the additive  group of a real finite dimensional real vector space $L$  equipped
with the order defined by a subsemigroup  $G_+\subset G\subset  L$. We are interested  in the following question:
\smallskip

 Under what conditions on the semigroup
$G_+$ is the total order on $G$  induced by a lexicographic order
on $L$ which is related to some coordinates system $\mathbf x$ on $L$?

\medskip

The following Theorem provides the answer.

\begin{theorem}\label{convex hull and order} The total order on a group $G\subset L$ which is defined by semigroup $G_+\subset G$ is induced by the
order $\succ_\mathbf x$ on $L$ related to some coordinate system $\mathbf x$   if and only if the convex hull $\Delta(G_+)$ of the set $G_+$ does not contain the origin.
\end{theorem}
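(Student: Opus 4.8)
The plan is to prove the two implications separately, using the two convex-geometry tools assembled in the previous section. For the ``only if'' direction, suppose the order on $G$ is induced by $\succ_{\mathbf x}$ for some coordinate system $\mathbf x$ on $L$. Then $G_+ = G \cap L_+(\mathbf x)$, so $G_+ \subset L_+(\mathbf x)$, and since $L_+(\mathbf x)$ is convex (Lemma \ref{property of order 1}) and does not contain the origin, its convex subset-hull $\Delta(G_+)$ cannot contain the origin either. This direction is essentially immediate.

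For the ``if'' direction, assume $0 \notin \Delta(G_+)$. I would first pass to the real vector space: let $V \subset L$ be the linear span of $G$, or more simply work in all of $L$. Apply the version of the Separation Theorem (Theorem \ref{thSepar}) to the convex set $\Delta(G_+)$ and the point $a = 0 \notin \Delta(G_+)$: this yields a coordinate system $\mathbf x$ on $L$ with $\Delta(G_+) \subset L_+(0,\mathbf x) = L_+(\mathbf x)$. In particular $G_+ \subset L_+(\mathbf x)$. It remains to show that the order defined by $G_+$ is exactly the order $\succ_{\mathbf x}$ restricted to $G$, i.e. that $G_+ = G \cap L_+(\mathbf x)$. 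The inclusion $G_+ \subset G \cap L_+(\mathbf x)$ is already established, so I must prove the reverse: if $x \in G$ and $x \succ_{\mathbf x} 0$, then $x \in G_+$. Here I invoke the trichotomy for $G_+$ (Lemma \ref{set G_+ 1}): for $x \neq 0$ exactly one of $x, -x$ lies in $G_+$. If $x \notin G_+$ then $-x \in G_+ \subset L_+(\mathbf x)$, so $-x \succ_{\mathbf x} 0$, i.e. $x \in L_-(\mathbf x)$, contradicting $x \in L_+(\mathbf x)$ by Lemma \ref{property of order 1}. The case $x = 0$ is excluded since $0 \succ_{\mathbf x} 0$ is false. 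This forces $x \in G_+$, completing the argument; uniqueness of $\mathbf x$ (when $G$ spans $L$) follows from the uniqueness clause in Theorem \ref{thSepar} or from the earlier lemma characterizing when two coordinate systems give the same order.

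The main obstacle, such as it is, lies in correctly setting up the application of the Separation Theorem and then nailing down that $G_+$ coincides with $G \cap L_+(\mathbf x)$ rather than merely being contained in it — the trichotomy property of $G_+$ is exactly what bridges this gap, and it is the one place where the full strength of ``$\succ$ is a \emph{total} order compatible with addition'' (encoded in conditions 1--3 of Lemma \ref{set G_+ 1}) gets used. A minor subtlety worth a remark is the case where $G$ does not span $L$: then the separating coordinate system is not unique, but any choice restricts to the same order on $G$, so the statement still holds (with uniqueness understood up to this ambiguity, or after restricting to the span of $G$).
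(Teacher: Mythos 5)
Your proposal is correct and follows essentially the same route as the paper: the forward direction via convexity of $L_+(\mathbf x)$, and the reverse direction by applying the separation theorem to $\Delta(G_+)$ at the origin and then upgrading $G_+\subset G\cap L_+(\mathbf x)$ to equality via the trichotomy $x\in G_+$ or $-x\in G_+$. The extra remarks on uniqueness and on $G$ not spanning $L$ are harmless additions not required by the statement.
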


\begin{proof} Assume that the order  on $G$ is induced by the lexicographic
 order $\succ_\mathbf x$ on $L$. Then, the set $G_+$ is contained in the convex set $L_{+}(\mathbf x)$, which does  not contain the origin. So, the convex hull $\Delta(G_+)$ does not contain  the origin either. This proves the Theorem in one direction.
\medskip

If the origin  does not belong to $\Delta(G_+)$, then, by our  version of the Separation Theorem, there
is a system of coordinates $\mathbf x$ such that $G_+\subset L_{+}(\mathbf x)$.

\medskip

Let us show that $G\cap  L_{+}(\mathbf x)=G_x$. Indeed, if there is a point $a\in  L_{+}(\mathbf x)\cap G$ not belonging to $G_+$, then $-a\in G_+$. But $-a$ does not belong to $L_{+}(\mathbf x)$. We obtain a contradiction which proves the needed statement.
\medskip

This identity $G\cap  L_{+}(\mathbf x)=G_+$  shows that the order on $G$ is
induced by the lexicographic order $\succ_{\mathbf x}$  on $L$.
\end{proof}

\begin{proof}[Proof of Theorem \ref{geometry of
order 1}]  Theorem \ref{geometry of order 1} follows from
 Theorem \ref{convex hull and order}. Indeed, if a total order $\succ$  on $L$ is compatible with addition and with multiplication on positive number, then by Lemma \ref{lemma8} the set $L_+$ responsible of the order $\succ$ is convex. By  Theorem  \ref{convex hull and order} there is a coordinate system $\mathbf x$ on $L$ such that $L+\subset L_+(\mathbf x)$. Moreover from  the proof of Theorem \ref{convex hull and order} one can see that $L\cap  L_{+}(\mathbf x)=L_+$.
 \end{proof}

 \begin{proof}[Proof of Theorem  \ref{geometry of
order 2}]
One  can be prove Theorem  \ref{geometry of
order 2} in the same way as Theorem \ref{convex hull and order}.
Indeed, since the set $X_+$ is convex and does not contain the origin. Thus  by  version of Separation Theorem there
is a system of coordinates $\mathbf x$ such that $X_+\subset L_{+}(\mathbf x)$.

\medskip

Let us show that $ L_{+}(\mathbf x)\subset X_+$. Indeed, if there is a point $a\in  L_{+}(\mathbf x)$ not belonging to $X_+$, then $-a\in X_+$. But $-a$ do not belong to $L_{+}$. We obtain a contradiction which proves the needed statement.
\end{proof}

 \subsection{Orders compatible  with addition on subgroups~of~$\mathbb Q^n$}

 The following Theorem holds.

\smallskip

\begin{theorem}\label{th1} Let $A$ be any subset of the $n$-dimensional  vector  space  $\mathbb Q^n$  over the field $\mathbb Q$ of rational numbers. Then, the  semigroup $S_A$ generated by  the set $A$ contains the origin if and only if the convex hull $\Delta(A)$  of $A$ contains the origin.
\end{theorem}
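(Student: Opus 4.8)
The plan is to prove the easy direction first and then reduce the hard direction to Caratheodory's Theorem (Theorem \ref{cara-th}) combined with the one-dimensional observation already used in Lemma \ref{rational group 1}.

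First, suppose the origin belongs to $S_A$. Every element of $S_A$ is a finite nonnegative-integer combination $\sum n_i a_i$ of elements $a_i \in A$ with $\sum n_i \geq 1$, so writing $0 = \sum n_i a_i$ with not all $n_i$ zero and dividing by $\sum n_i > 0$ exhibits the origin as a convex combination of points of $A$; hence $0 \in \Delta(A)$. This direction needs no hypothesis on the field.

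For the converse, suppose $0 \in \Delta(A)$. By Caratheodory's Theorem there is a finite affinely independent subset $B = \{b_1,\dots,b_{k+1}\} \subset A$ and reals $\lambda_i \geq 0$ with $\sum \lambda_i = 1$ and $\sum \lambda_i b_i = 0$. The key point is that since $B$ is affinely independent, the representation of the origin as an affine combination of the $b_i$ is \emph{unique} (this is the uniqueness clause of Theorem \ref{cara-th}). Now $0 \in \mathbb{Q}^n$ and each $b_i \in \mathbb{Q}^n$, so the linear system determining the coefficients $(\lambda_i)$ of the origin in terms of the $b_i$ has rational entries and a unique solution; therefore each $\lambda_i$ is in fact a rational number, $\lambda_i = p_i/q$ with $p_i$ nonnegative integers, $q$ a positive integer, and $\sum p_i = q \geq 1$. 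Then $\sum p_i b_i = q\left(\sum \lambda_i b_i\right) = 0$ is a nonnegative-integer combination of elements of $A$ with $\sum p_i \geq 1$, so the origin lies in $S_A$.

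The main obstacle is establishing the rationality of the barycentric coefficients $\lambda_i$; everything else is bookkeeping. This is exactly where affine independence is essential — without it there could be a continuum of representations of $0$, none of them rational, which is precisely what happens for irrational subgroups of $\mathbb{R}$ and is why the theorem is false over $\mathbb{R}$. Once affine independence is invoked, rationality follows because the $\lambda_i$ are the unique solution of a full-rank linear system with rational data (equivalently, after translating so one $b_i$ is the origin, the $\lambda_i$ are obtained by expressing a rational vector in a rational basis of the linear span, which forces rational coordinates). With the $\lambda_i$ rational, clearing denominators to pass from a convex combination to an integer relation is immediate, and this is the same trick already used in the one-dimensional Lemma \ref{rational group 1}.
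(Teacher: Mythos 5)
Your proposal is correct and follows essentially the same route as the paper: the easy direction by normalizing an integer relation into a convex combination, and the converse by Caratheodory's Theorem plus the observation that affine independence forces the barycentric coordinates of the rational point $0$ with respect to rational points $b_i$ to be rational, after which clearing denominators gives the integer relation. Your write-up just makes explicit the linear-algebra reason for rationality that the paper states in one line.
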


\begin{proof} Assume that $0\in \Delta(A)$. By Caratheodory's  theorem there is a set $B\subset A$ of affinely independent points $\{a_1,\dots, a_{k+1} \}$ and a $(k+1)$-tuple of nonnegative numbers $\{\lambda_i\}$  such that $0=\lambda_ia_i$ and $\sum\lambda_i=1$.
\smallskip

Since the points  $a_i$and the origin  belong to $\mathbb Q^n$,  and  $a_1,\dots, a_k$ are affinely independent,  all numbers $\lambda_1,\dots,\lambda_k$ are rational.
\smallskip

 Multiplying the identity $\sum \lambda_i a_i$  by the product  of the denominators of rational numbers  $\lambda_i$, we obtain the relation $$\sum q_ia_i=0,$$ where $q_i$ are natural numbers.
 \medskip

This identity means that the semigroup $S_A$, together with the set $A$, contains the origin.
 \medskip

On the other hand, if $0\in S_A$, then there are points  $a_i\in A$ and natural numbers  $q_i$, such that $\sum q_ia_i=0$. Dividing this identity by $Q=\sum q_i$, and putting $\lambda_i=\dfrac{q_i}{Q}$, we obtain the representation of the origin in the form $0=\sum \lambda_i a_i$ where $\lambda_i>0$ and $\sum \lambda_i=1$. This means that the origin belongs to the convex hull of the set $A$.
\end{proof}

\begin{theorem}\label{result on Q^n} A total order $\succ$  on a subgroup  $G$ of the additive group of the $n$-dimensional vector space $ \mathbb Q^n\subset \mathbb R^n$ over rational numbers is  compatible with addition if and only if the order $\succ$ is induced from the some lexicographic order $\succ_{\mathbf x}$  on $\mathbb R^n$.
\end{theorem}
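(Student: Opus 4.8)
The plan is to combine Theorem~\ref{convex hull and order} (which characterizes when an order on a subgroup $G \subset L$ is induced by a lexicographic order on $L$, namely precisely when $\Delta(G_+)$ does not contain the origin) with Theorem~\ref{th1} (which, over $\mathbb{Q}^n$, relates membership of the origin in a convex hull $\Delta(A)$ to membership of the origin in the generated semigroup $S_A$). The two directions of the equivalence are handled separately.

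For the ``if'' direction: suppose $\succ$ on $G$ is induced from some lexicographic order $\succ_{\mathbf x}$ on $\mathbb{R}^n$. Then by the easy direction of Theorem~\ref{convex hull and order} (or directly: $G_+ \subset L_+(\mathbf x)$, which is convex and omits the origin, so its convex hull also omits the origin), the order is in particular compatible with addition, since every lexicographic order is. Actually this direction is immediate because any lexicographic order is compatible with addition by the earlier lemma, and restricting a compatible order to a subgroup keeps it compatible.

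For the ``only if'' direction, which is the substantive one: assume $\succ$ is compatible with addition on $G$. Let $G_+ = \{x \in G : x \succ 0\}$, which is a subsemigroup of $G$ by Lemma~\ref{set G_+ 1}, and note $0 \notin G_+$. Since $G \subset \mathbb{Q}^n$, we have $G_+ \subset \mathbb{Q}^n$. I want to conclude that $\Delta(G_+)$ does not contain the origin, so that Theorem~\ref{convex hull and order} applies. Here is where Theorem~\ref{th1} enters, with $A = G_+$: the semigroup $S_{G_+}$ generated by $G_+$ is just $G_+$ itself (it is already a semigroup), and since $0 \notin G_+$, the semigroup does not contain the origin; therefore by Theorem~\ref{th1} the convex hull $\Delta(G_+)$ does not contain the origin. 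Now Theorem~\ref{convex hull and order} gives a coordinate system $\mathbf x$ on $\mathbb{R}^n$ such that the order on $G$ is induced by $\succ_{\mathbf x}$, as desired.

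The only point requiring a little care --- and the closest thing to an obstacle --- is the application of Theorem~\ref{th1} to $A = G_+$, since that theorem as stated concerns the semigroup \emph{generated by} $A$; one must observe that $G_+$ is closed under addition so the generated semigroup coincides with $G_+$, and that $G_+$ consists of rational points so the rationality hypothesis of Theorem~\ref{th1} is met. Everything else is a direct citation of the two quoted theorems.
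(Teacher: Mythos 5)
Your proposal is correct and follows essentially the same route as the paper: in the substantive direction it applies Theorem~\ref{th1} to the semigroup $G_+$ (which contains only rational points and omits the origin) to conclude that $\Delta(G_+)$ omits the origin, and then invokes Theorem~\ref{convex hull and order}. Your explicit remark that $S_{G_+}=G_+$ because $G_+$ is already closed under addition is a small but welcome point of care that the paper leaves implicit.
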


\begin{proof}Assume that the order $\succ$ on $G$ is compatible with addition.  The semigroup $G_+\subset G \subset \mathbb Q^n$ which is responsible   for the  order $\succ$ cannot contain the origin. Thus, by Theorem \ref{th1}, the convex hull $\Delta(G_+)$ of this semigroup also does not contain the origin. So, by Theorem \ref{convex hull and order}, the order $\succ$ is induced by some lexicographic order $\succ_\mathbf x$  on $\mathbb R^n$.

On the other hand, the order $\succ$ induced by any lexicographic order $\succ_\mathbf x$ on $\mathbb R^n$ is compatible with addition.
\end{proof}

\section{Well-ordered semigroups}

In this section we discuss well-ordered finitely generated  subsemigroups of the  additive group of   finite dimensional vector spaces over the real numbers and over the rational numbers.

\subsection{Well-ordered semigroups of $\mathbb R^n$}

The following theorem holds:

\begin{theorem}\label{th on well order 1} Assume that  an  order $\succ$ of a finitely generated semigroup
 $S\subset L$ of the  additive group of a  real $n$-dimensional space $L$
 is induced by the lexicographic order $\succ_{\mathbf x}$  related to some coordinate system $\mathbf x$ on $L$. Then, $S$ is a well-ordered set with respect to the order $\succ$ if and only if the ordered semigroup   $S$ satisfies the condition from Lemma \ref{well-ordered 1}.
\end{theorem}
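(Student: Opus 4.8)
The plan is to treat the two implications separately. The direction ``$S$ well-ordered $\Rightarrow$ $2a\succ a$ for every nonzero $a\in S$'' is precisely the content of Lemma~\ref{well-ordered 1}, so no new work is required there. The substance is the converse: assuming that $2a\succ_{\mathbf x}a$ for every nonzero $a\in S$ --- equivalently, subtracting $a$ and using compatibility with addition, that $a\succ_{\mathbf x}0$, i.e.\ that the first nonzero coordinate of every nonzero element of $S$ is positive --- I must show that $S$ contains no infinite strictly $\succ_{\mathbf x}$-decreasing sequence. I would argue by induction on $n=\dim L$, the base case $n=0$ (where $S\subseteq\{0\}$) being trivial.

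For the inductive step, fix generators $g_1,\dots,g_m$ of $S$. By the hypothesis $x_1(g_j)\ge 0$ for every $j$, so the linear projection $\pi_1=x_1\colon L\to\mathbb R$ carries $S$ onto a finitely generated subsemigroup $\pi_1(S)\subset\mathbb R_{\ge 0}$. Suppose, for contradiction, that $a_1\succ_{\mathbf x}a_2\succ_{\mathbf x}\cdots$ is an infinite strictly decreasing sequence in $S$. Since $a\succ_{\mathbf x}b$ forces $x_1(a)\ge x_1(b)$, the numbers $x_1(a_i)$ form a non-increasing sequence inside $\pi_1(S)$; by Lemma~\ref{lemma14} only finitely many elements of $\pi_1(S)$ are smaller than $x_1(a_1)+1$, so $\{x_1(a_i)\}$ takes finitely many values and is therefore eventually constant, equal to some $c$ for all $i\ge N$.

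Now partition the generators into $J_+=\{j:x_1(g_j)>0\}$ and $J_0=\{j:x_1(g_j)=0\}$; note $g_j\in L_1$ for $j\in J_0$. For each $i\ge N$ fix a representation $a_i=\sum_j n_{ij}g_j$ with $n_{ij}\in\mathbb Z_{\ge 0}$. From $\sum_{j\in J_+}n_{ij}\,x_1(g_j)=c$ and the positivity of the coefficients $x_1(g_j)$, $j\in J_+$, the sums $\sum_{j\in J_+}n_{ij}$ are bounded uniformly in $i$, so the vectors $v_i:=\sum_{j\in J_+}n_{ij}g_j$ lie in a finite set. Passing to a subsequence I may assume $v_i\equiv v$ is constant; then $w_i:=a_i-v=\sum_{j\in J_0}n_{ij}g_j$ lies in the subsemigroup $S_1\subset L_1$ generated by $\{g_j:j\in J_0\}$, and compatibility of $\succ_{\mathbf x}$ with addition yields a strictly decreasing sequence $w_1\succ_{\mathbf x}w_2\succ_{\mathbf x}\cdots$ in $S_1$. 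But the restriction of $\succ_{\mathbf x}$ to $L_1$ is the lexicographic order related to the coordinate system $(x_2,\dots,x_n)$, the semigroup $S_1$ is finitely generated, and $S_1\subseteq S$ still satisfies $2a\succ a$ for every nonzero $a$; since $\dim L_1=n-1$, the induction hypothesis says $S_1$ is well-ordered, contradicting the existence of $\{w_i\}$. (If $J_0=\emptyset$ this is even more immediate: then $w_i\equiv 0$, which cannot be strictly decreasing.) This completes the induction.

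The step I expect to be the main obstacle is exactly this reduction to the $(n-1)$-dimensional subspace $L_1$: one must pass from the constant-$x_1$ tail of the decreasing sequence to an honest decreasing sequence living inside $L_1$, and since the representation $a_i=\sum n_{ij}g_j$ is not unique this forces the finiteness/pigeonhole argument isolating the ``$J_+$-part'' $v_i$. Once that is in place, the remaining points --- that $L_1$ carries the induced lexicographic order in the coordinates $(x_2,\dots,x_n)$, that $S_1$ is finitely generated by the $J_0$-generators, and that the hypothesis $2a\succ a$ descends to $S_1$ --- are routine.
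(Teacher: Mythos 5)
Your proof is correct and follows essentially the same route as the paper's: induction on the dimension $n$, splitting the generators according to whether $x_1$ is positive or zero, Lemma~\ref{lemma14} applied to the projected semigroup $x_1(S)$, and the key finiteness observation that the part of $S$ lying on a fixed level $\{x_1=c\}$ splits into finitely many translates of the subsemigroup contained in the hyperplane $\{x_1=0\}$ (your pigeonhole on the ``$J_+$-part'' $v_i$ is the paper's finite set $F$). The only cosmetic difference is that you rule out infinite strictly decreasing sequences, whereas the paper directly exhibits a least element of an arbitrary nonempty subset.
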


\begin{proof} If $S$ is a well-ordered set, then $S$ satisfies  the conditions of Lemma \ref{well-ordered 1}. Let us prove the Theorem in the opposite direction.
\medskip

We will use induction on the dimension  $n$ of the ambient space $L$. Note that for $n=1$, the Theorem is already proven above (see Lemma \ref{lemma14}).
\medskip

Assume that the order on $S\subset L$, where $\dim L=n$, is induced by the lexicographic
order $\succ_\mathbf x$  on $L$  related  to the coordinates system
$\mathbf x=(x_1,\dots,x_n)$.
\medskip

Let $A$ be a set of generators of the semigroup $S$ and let $B$ be a subset of  $A$ on which the function $x_1$ is positive. Since $S$ satisfies the condition in  Lemma \ref{well-ordered 1}, the function $x$ is nonnegative on $S$. So, the function $x$ is equal to zero on the set
 $C=A\setminus B$.
\medskip

Denote by $S_{x_1}$ the semigroup $S\cap \{x_1=0\}$ lying in the space
$x_1=0$ of dimension $n-1$. The semigroup $S_{x_1}$ is generated by elements from the set $C$, and equipped with the lexicographic order related to the coordinate system $(x_2,\dots,x_n)$ on the hyperplane $x_1=0$. By induction, the semigroup  $S_{x_1}$  is well-ordered.

\medskip

Let us consider the image $x_1(S)$ of the semigroup  $S$ under the map $x_1:L\to \mathbb R$. The set $x_1(S)$ is an additive subsemigroup of $\mathbb R$ generated by elements from the set $x_1(B)$ (and by zero, if the set $S_{x_1}$ is not empty).
\medskip

The finitely generated semigroup $x_1(S)\subset \mathbb R$ is a well-ordered set by Lemma \ref{lemma14}.

\medskip

Let us show that any set $D\subset S$ contains a smallest element. First, there is a
smallest value $d=\min x_1(D)$  of  the function $x_1$ on the set $D$ since the set $x_1(s)$ is well-ordered.

\medskip

Consider a semigroup $S(B)$ generated by elements of the  set $B$. Let $F\subset S(B)$ be a subset on which the function $x_1$ is equal to $d$. The set $f$ is finite since the function $x_1$ is positive on the finite set $B$.
\medskip

The set $\{x_1=d\}\cap S\subset S$ on which $x_1$ is equal to $d$ is covered by a finite collection of shifted copies $S_{x_1}+f$ of the semigroup $S_{x_1}$, where $f$ is any element of $F$.

\smallskip

Each such copy $S_{x_1}+f$  is a well-ordered set, since $S_{x_1}$ is well-ordered and its order is compatible with addition. So, the set  $\{x_1=d\}\cap S$ is  well-ordered.
\medskip

We see that the set $D\subset S$ contains a smallest element: the smallest element of the set $\{x_1=d\}\cap D$, where $d$ is the smallest value of $x_1$ on $d$.
\end{proof}

\subsection{Well-ordered semigroups of~$\mathbb Q^n$}

\begin{theorem}\label{th on well order 2}
Assume that  an  order $\succ$ on a finitely generated semigroup
 $S\subset \mathbb Q^n$ of the  additive group of an  $n$-dimensional vector space $\mathbb Q^n$ over rational numbers is compatible with addition.
 Then, $S$ is a well-ordered set with respect to the order $\succ$ if and only if the ordered semigroup   $S$ satisfies the condition in Lemma \ref{well-ordered 1}.
\end{theorem}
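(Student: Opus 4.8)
The plan is to reduce Theorem \ref{th on well order 2} to the already-established Theorem \ref{th on well order 1} about well-ordered finitely generated subsemigroups of real vector spaces. The "only if" direction is immediate from Lemma \ref{well-ordered 1}, exactly as in the proof of Theorem \ref{th on well order 1}, so the work is entirely in the "if" direction: assuming $2a \succ a$ for every nonzero $a \in S$, we must show $S$ is well-ordered.

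First I would invoke Theorem \ref{result on Q^n}: since $S$ generates a subgroup $G \subset \mathbb{Q}^n$ and the order $\succ$ on $G$ is compatible with addition, the order $\succ$ is induced by some lexicographic order $\succ_{\mathbf x}$ on $\mathbb{R}^n$ coming from a coordinate system $\mathbf x = (x_1,\dots,x_n)$. Next I would regard $S$ as a finitely generated subsemigroup of the real vector space $L = \mathbb{R}^n$ (it sits inside $\mathbb{Q}^n \subset \mathbb{R}^n$ and is generated by finitely many elements there), equipped with the restriction of $\succ_{\mathbf x}$, which agrees with the original order $\succ$ on $S$. The hypothesis that $S$ satisfies the condition of Lemma \ref{well-ordered 1} is a statement about the order $\succ$ on $S$ alone, so it transfers verbatim to this new picture. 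Now Theorem \ref{th on well order 1} applies directly and yields that $S$ is a well-ordered set with respect to $\succ_{\mathbf x}$, hence with respect to $\succ$.

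The only point requiring a little care — and the place I expect a reader might stumble — is the passage from "$S$ is a finitely generated subsemigroup of $\mathbb{Q}^n$" to "$S$ is a finitely generated subsemigroup of $\mathbb{R}^n$ with an order induced by a lexicographic order on $\mathbb{R}^n$." One must check that the lexicographic order produced by Theorem \ref{result on Q^n} on $\mathbb{R}^n$ restricts on $G$ (and hence on $S$) to precisely the order we started with; but this is exactly the content of Theorem \ref{result on Q^n}, so nothing new is needed. After that, the finite generation hypothesis is preserved trivially (the same finite generating set works), and the condition of Lemma \ref{well-ordered 1} is an intrinsic property of the ordered semigroup, independent of the ambient space. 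Thus the theorem is essentially a corollary of Theorem \ref{result on Q^n} and Theorem \ref{th on well order 1}, with no genuine obstacle to overcome.
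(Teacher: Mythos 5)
Your proposal is correct and follows essentially the same route as the paper: extend the order to the group generated by $S$, apply Theorem \ref{result on Q^n} to see that the order is induced by a lexicographic order on $\mathbb{R}^n$, and then conclude via Theorem \ref{th on well order 1}, with the converse direction supplied by Lemma \ref{well-ordered 1}. No substantive difference from the paper's argument.
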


\begin{proof} An order $\succ$ compatible with addition can be uniquely extended to an order compatible with addition to the group $G$, generated by the semigroup $S$.

By Theorem \ref{result on Q^n}, any order compatible with addition on the group $S\subset \mathbb Q^n$ is induced by a lexicographic order. To complete the proof it is enough to use Theorem \ref{th on well order 1}
\end{proof}

\section{ Subgroups and Subsemigroups of the lattice $\mathbb Z^n$}

Let us apply the  results discussed above to  additive subgroups and subsemigroups  of the standard lattice $\mathbb Z^n$.

The lattice $\mathbb Z^n$ is naturally embedded in the spaces $\mathbb Q^n \subset\mathbb R^n$.

\medskip

\begin{theorem}  Each total order on the group $\mathbb Z^n$ and on any of its subssemigroup $S\subset \mathbb Z^n$ that is compatible with addition is induced by a lexicographic order $\succ_\mathbf x$   on $\mathbb R^n$ that is related to some coordinate system $\mathbf x=(x_1,\dots,x_n)$.
\medskip

Such an order $\succ_\mathbf x$ on  a finitely generated semigroup $S=\mathbb Z^n$ is a well-order on $S$ if and only if any nonzero element $a\in S$ satisfies the inequality $2a\succ_{\mathbf x} a$.
\smallskip

In particular the semigroup $\mathbb Z^n_{\geq 0}$ consisting of integral points with nonnegative coordinates is a well-ordered set with respect to the lexicographic order $\succ_\mathbf X$ if and only if the origin is the smallest element of the semigroup $\mathbb Z^n_{\geq 0}$.

\end{theorem}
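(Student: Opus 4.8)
The plan is to derive this theorem as a direct corollary of the machinery already assembled in the preceding sections, treating its three assertions in turn. Note first that $\mathbb{Z}^n$ sits inside $\mathbb{Q}^n$, which sits inside $\mathbb{R}^n$, so every subgroup or subsemigroup $S\subset\mathbb{Z}^n$ is in particular a subsemigroup of $\mathbb{Q}^n$ whose ambient group $G$ (the Grothendieck group of $S$, by Corollary~\ref{col1}) embeds in $\mathbb{Q}^n$.

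For the first assertion, let $\succ$ be a total order on $S$ compatible with addition. By Corollary~\ref{col2} it extends uniquely to an order on $G$ compatible with addition, and $G$ is a subgroup of $\mathbb{Q}^n$. Theorem~\ref{result on Q^n} then says this order on $G$ is induced by some lexicographic order $\succ_\mathbf{x}$ on $\mathbb{R}^n$, and restricting back to $S$ gives the claim. The same argument applied to $G=\mathbb{Z}^n$ itself handles the case $S=\mathbb{Z}^n$.

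For the second assertion (the well-order criterion for a finitely generated $S$), one direction is Lemma~\ref{well-ordered 1}: if $S$ is well-ordered then $2a\succ a$ for every nonzero $a$, which is exactly the stated inequality. Conversely, suppose $2a\succ_\mathbf{x} a$ holds for all nonzero $a\in S$. By the first assertion the order is lexicographic, and $S$ is finitely generated, so Theorem~\ref{th on well order 2} (or Theorem~\ref{th on well order 1}, since the order is already known to be lexicographic) applies: the condition of Lemma~\ref{well-ordered 1} is precisely $2a\succ a$ for all nonzero $a$, hence $S$ is well-ordered. For the third assertion, apply the second with $S=\mathbb{Z}^n_{\geq 0}$, which is finitely generated (by the standard basis vectors) and contains the origin; as remarked after Lemma~\ref{well-ordered 1}, when $S$ contains the origin the condition $2a\succ a$ for all nonzero $a$ is equivalent to the origin being the smallest element of $S$, giving the stated equivalence for $\succ_\mathbf{x}$.

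I do not expect a genuine obstacle here, since this is a packaging of earlier results; the only point requiring a little care is the bookkeeping about which ambient space and which ambient group one is working in at each stage — in particular, making sure that "$S$ finitely generated" is what is needed to invoke Theorem~\ref{th on well order 1}/\ref{th on well order 2} (those theorems require finite generation of the semigroup, not of its Grothendieck group), and that the hypothesis $2a\succ_\mathbf{x} a$ is literally the hypothesis of Lemma~\ref{well-ordered 1}. One should also note the minor typo in the statement ("$S=\mathbb{Z}^n$" should read "$S\subset\mathbb{Z}^n$") so that the finite-generation hypothesis is meaningful, since $\mathbb{Z}^n$ is itself finitely generated and the argument goes through verbatim for any finitely generated $S\subset\mathbb{Z}^n$.
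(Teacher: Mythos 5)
Your proposal is correct and follows exactly the route the paper intends: the paper gives no explicit proof here, presenting the theorem as an application of the preceding results, and your assembly via Corollary~\ref{col2}, Theorem~\ref{result on Q^n}, Lemma~\ref{well-ordered 1}, and Theorems~\ref{th on well order 1}/\ref{th on well order 2} is precisely that derivation. Your observations about the typo ``$S=\mathbb Z^n$'' and about where finite generation is actually used are apt.
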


 \end{document}